
\documentclass[a4paper,12pt,reqno]{amsart}

\usepackage{amssymb,amsfonts,amscd,latexsym}
\usepackage{amsthm}
\usepackage{enumerate}
\usepackage{amsmath, mathrsfs,bm}
\usepackage{graphics,color}
\usepackage{xy}
\xyoption{matrix} \xyoption{arrow}

\numberwithin{equation}{section}

\hfuzz=2pt \vfuzz=3pt

\addtolength{\textwidth}{3cm}
\addtolength{\oddsidemargin}{-1.5cm}
\addtolength{\evensidemargin}{-1.5cm}


\begingroup
\newtheorem{thm}{Theorem}[section]
\newtheorem{cor}[thm]{Corollary}
\newtheorem{prop}[thm]{Proposition}
\newtheorem{lem}[thm]{Lemma}

\theoremstyle{definition}

\newtheorem{rmk}[thm]{Remark}

\newtheorem*{acknowledgement}{Acknowledgement}
\endgroup


\newcommand{\rada}{{\textup{rad}\kern.3pt A}}
\newcommand{\El}[1]{{\mathcal{E}\kern-2.6pt\ell({#1})}}
\newcommand{\Ell}{{\mathcal{E}\kern-2.6pt\ell}}
\newcommand{\ElAX}{{\mathcal{E}\kern-2.6pt\ell(A,L(X))}}
\newcommand{\ElnAX}{{\mathcal{E}\kern-2.6pt\ell_n(A,L(X))}}
\newcommand{\ElthAX}{{\mathcal{E}\kern-2.6pt\ell_3(A,L(X))}}

\newcommand {\N}{\mathbb{N}} 
\newcommand {\CC}{\mathbb{C}} 
\newcommand{\Dim}{{\sf dim}\,}
\newcommand{\lDim}{{\sf ldim}\,}

\newcommand{\spa}{{\mkern.5mu\rm span}\mkern1.5mu}

\newcommand{\Tr}{{\sf Tr}}
\newcommand{\zset}[1]{{\boldsymbol{#1}\zeta}}

\def\C*{{\sl C*}-algebra}
\def\Cs*{{\sl C*}-subalgebra}

\newcommand {\be}{\begin{equation}}
\newcommand {\ee}{\end{equation}}
\newcommand {\beq}{\begin{eqnarray*}}
\newcommand {\eeq}{\end{eqnarray*}}

\begin{document}
\title[Locally quasi-nilpotent elementary operators]{Locally quasi-nilpotent elementary operators}
\author{Nadia Boudi}
\address{D\' epartement de Math\' ematiques, Universit\' e Moulay Ismail, Facult\'e des Sciences, Mekn\`es, Maroc}
\email{nadia\_boudi@hotmail.com}
\author{Martin Mathieu}
\address{Pure Mathematics Research Centre, Queen's University Belfast, University Road, Bel\-fast BT7 1NN, Northern Ireland}
\email{m.m@qub.ac.uk}

\begin{abstract}
Let $A$ be a unital dense algebra of linear mappings on a complex vector space~$X$.
Let $\phi=\sum_{i=1}^n M_{a_i,b_i}$ be a locally quasi-nilpotent elementary operator of length~$n$ on~$A$.
We show that, if $\{a_1,\ldots,a_n\}$ is locally linearly independent, then the local dimension of
$V(\phi)=\spa\{b_ia_j: 1 \leq i,j \leq n\}$ is at most $\frac{n(n-1)}{2}$.   If  $\lDim V(\phi)=\frac{n(n-1)}{2} $, then there
exists a  representation of $\phi$ as $\phi=\sum_{i=1}^n M_{u_i,v_i}$ with $v_iu_j=0$ for $i\geq j$.
Moreover, we give a complete characterization of locally quasi-nilpotent elementary operators of length~$3$.
\end{abstract}

\subjclass[2000]{47B47; 47L05; 15A03}
\keywords{Elementary operator, quasi-nilpotent, locally linearly independent}

\maketitle

\section{Introduction}\label{sect:intro}

\noindent
Let $A$ be a unital complex algebra.
In order to gain information on the range of a linear mapping $\phi\colon A\to A$ one can impose
conditions on the size of the spectrum of the elements $\phi(x)$, $x\in A$. Various classes of linear mappings
such as (inner) derivations or generalised (inner) derivations with small spectrum in the range,
especially when the spectrum of every element in the range consists only of zero, have been studied
by Aupetit, Bre\v sar, Le Page, Pt\'ak, \v Semrl, the present authors and many others; see, e.g.,
\cite{Aup, ChKeLee, NaMa04, NaMa11, Pta} and the references contained therein.
Quite often such results are connected with commutativity criteria; see, e.g., \cite[\S2 in Chapter~V]{Aup}.

An attractive and fairly large class of linear mappings are the elementary operators, that is, those which
can be written as $\phi(x)=\sum_{i=1}^n a_ixb_i$, $x\in A$. In this paper we intend, on the one hand,
to extend various results that were available only for special elementary operators, for instance, for generalised
inner derivations $x\mapsto ax-xb$, to general elementary operators; on the other hand, we want to uncover
the algebraic structure behind arguments which were at times used in an analytic setting.

Clearly, every elementary operator leaves each ideal of $A$ invariant and thus induces an elementary operator
on each primitive quotient of~$A$. As a result, we focus our attention here solely on the case of dense algebras of linear mappings
on a complex vector space. The general case then is mainly a question
of putting the information from primitive quotients together to a global picture which can, e.g., be done with the aid
of the extended centroid of a semisimple algebra~$A$.

In the setting of a dense algebra $A$ of linear mappings on a vector space~$X$,
it turns out that the local dimension of $\spa \{a_1, \ldots, a_n\}$  plays an important role in such descriptions.
In the case where $\{a_1, \ldots, a_n\}$ is locally linearly independent, we will determine the maximal local dimension
of $\spa \{b_i a_j: 1 \leq i,j \leq n\}$, and we shall give a complete characterization if the local dimension is maximal.
Our arguments depend heavily on Gerstenhaber's well-known theorem concerning linear spaces of nilpotent matrices~\cite{Ger}.
Lacking a general description of maximal nilpotent spaces, and
since the classification of minimal locally linearly dependent spaces is a highly non-trivial matter \cite{BrSe, MeSe},
we will, in the last part of the paper (Section~\ref{sect:length-three}), have to limit ourselves to elementary operators of length at most three.

The main idea in our approach is as follows.
Starting with a locally quasi-nilpotent elementary operator~$\phi$ (that is, the spectrum of $\phi(x)$ is the singleton~$0$
for every $x\in A$), we aim to find finite-dimensional $\phi(x)$-invariant subspaces $L\subseteq X$, for certain~$x$, so that
$\phi(x)_{|L}$ will be nilpotent. If the dimension of $L$ is large relative to the local dimension of $\spa \{a_1, \ldots, a_n\}$,
we will get full information on how to represent $\phi$ with suitable coefficients. The details of this approach will
be explained at the start of Sections~\ref{sect:locally-nil} and~\ref{sect:length-three}.

In a sequel to this paper,~\cite{NaMa14}, we shall apply our results to the study of spectrally bounded elementary operators
on Banach algebras continuing the line of research in~\cite{CuMa} and~\cite{NaMa11}, for example.

\section{Preliminaries}\label{sect:pres}

\noindent
Let $X,Y$ be two complex vector spaces. The space of all linear mappings from  $X$ to $Y$ will be designated by~$L(X, Y)$, and
$L(X)$ stands for the algebra $L(X,X)$.
Throughout this paper, $A$ will denote a dense algebra (in the sense of Jacobson) of linear mappings on~$X$.

Though not quite standard, we will use the term \textit{elementary operator\/} for every linear mapping
$\phi\colon A\to L(X)$ that can be written in the form
\begin{equation}\label{eq:elop-def}
\phi(x)=\sum_{i=1}^n a_ixb_i\quad(x\in A),
\end{equation}
for some $a_i,\,b_i\in L(X)$ and some $n\in\N$. Special cases are $L_a\colon x\mapsto ax$, $R_b\colon x\mapsto xb$ and
$M_{a,b}=L_aR_b$. Clearly the representation  of $\phi$ in a sum as in~\eqref{eq:elop-def} is not unique; but
it is well known that, in our setting, whenever
we change the coefficients $a_i$, $b_i$ to new ones $u_i$, $v_i$ (and possibly the~$n$ to some~$m$)
we obtain $u_i\in\spa\{a_1,\ldots,a_n\}$ and $v_i\in\spa\{b_1,\ldots,b_n\}$.
The smallest $n\in\N$ such that the elementary operator $\phi$ can be written as $\phi=\sum_{i=1}^n M_{a_i,b_i}$
is called \textit{the length of}~$\phi\,$ and will be abbreviated as~$\ell(\phi)$. If $\phi=\sum_{i=1}^n M_{a_i,b_i}$
and $\ell(\phi)=n$ then, evidently, the sets $\{a_1,\ldots,a_n\}$ and $\{b_1,\ldots,b_n\}$ are linearly independent.
Moreover, if $\phi=\sum_{i=1}^n M_{u_i,v_i}$, there exists an invertible matrix $P\in M_n (\CC)$ such that
\[
(v_i u_j)_{1 \leq i,j \leq n}= P^{-1} (b_i a_j)_{1 \leq i,j \leq n}\, P.
\]

We will denote by $\ElAX$ and $\ElnAX$, respectively the set of elementary operators from $A$ to $L(X)$ of arbitrary length and
of fixed length~$n$.
If $ \phi= \sum_{i=1}^n {M_{a_i,b_i}} \in \ElAX$, we define a new elementary operator $\phi^*$ by  $\phi^*= \sum_{i=1}^n M_{b_i,a_i}$.

We shall use the following notation for  $\phi =\sum_{i=1}^n {M_{a_i,b_i}}$:
\begin{equation*}\label{eq:notation}
\begin{split}
L (\phi)  &= \spa \{a_1, \ldots, a_n \},\\
R (\phi)  &= \spa \{b_1, \ldots, b_n\},\text{and}\\
V(\phi)   &= \spa \{b_i a_j: 1 \leq i,j \leq n\}.
\end{split}
\end{equation*}

A linear mapping $y\in L(X)$ is said to be \textit{quasi-nilpotent\/} if $\lambda-y$ is bijective for each $\lambda\in\CC\setminus\{0\}$
and $y$ itself is not bijective. We shall call $\phi\in\ElAX$ \textit{locally quasi-nilpotent\/} if $\phi(x)$ is quasi-nilpotent
for every $x\in A$.

\smallbreak
For a finite-dimensional subspace $V$ of~$L(X,Y)$,
let $\lDim V= \max \{ \Dim V \zeta: \zeta \in X \}$ denote the \textit{local dimension of}~$V$, cf.~\cite{LiPa}.
Recall that $V$ (or, equivalently, a basis of~$V$) is said to be \textit{locally linearly dependent\/}  if $\lDim V < \Dim V$.
For background on these notions, the reader is referred to \cite{BrSe, MeSe}.
In the case that $\lDim V= \Dim V$, any vector satisfying $\Dim V \zeta=\Dim V$ is called a \textit{separating vector of}~$V$.

Spaces of matrices with bounded rank have been extensively studied by many authors, see, e.g.,~\cite{AtLl}, and their classification
is still a challenging problem. The connection with  locally linearly dependent spaces is given by the following lemma the proof of which
is straightforward and hence omitted.

\begin{lem}\label{mbr}
Let $X,Y$ be complex vector spaces, and let $V$ be a finite-dimensional subspace of $L(X,Y)$.
Suppose that $\lDim V=r$ and set $\widehat{X}= \{ \widehat{\zeta}: \zeta \in X\}$, where $\widehat{\zeta}:  V \rightarrow Y$ is
defined by $\widehat\zeta (T)= T\zeta$ for all $T \in V$. Then $\widehat{X}$ is a subspace of $L(V,Y)$ of rank at most~$r$.
\end{lem}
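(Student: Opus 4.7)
The plan is to unpack the definitions; the lemma is essentially a reformulation of what it means for $V$ to have local dimension~$r$.

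First I would verify that $\widehat{X}$ is a linear subspace of $L(V,Y)$. Each $\widehat{\zeta}\colon V\to Y$ is linear in~$T$ since the action $T\mapsto T\zeta$ is linear for fixed~$\zeta$. Moreover, the map $\zeta\mapsto\widehat{\zeta}$ from $X$ into $L(V,Y)$ is itself linear: for $\zeta_1,\zeta_2\in X$ and $\lambda\in\CC$, one checks
\[
\widehat{\zeta_1+\lambda\zeta_2}(T)=T(\zeta_1+\lambda\zeta_2)=T\zeta_1+\lambda T\zeta_2=(\widehat{\zeta_1}+\lambda\widehat{\zeta_2})(T)
\]
for all $T\in V$. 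Hence $\widehat{X}$ is the linear image of $X$ under $\zeta\mapsto\widehat{\zeta}$ and is therefore a subspace of $L(V,Y)$.

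Next I would bound the rank of an arbitrary element $\widehat{\zeta}\in\widehat{X}$. The image of $\widehat{\zeta}$ is exactly $\{T\zeta:T\in V\}=V\zeta$, so its rank equals $\Dim V\zeta$. By the very definition $\lDim V=\max\{\Dim V\zeta:\zeta\in X\}=r$, we have $\Dim V\zeta\le r$ for every $\zeta\in X$. Consequently every $\widehat{\zeta}\in\widehat{X}$ has rank at most~$r$, which is the assertion.

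There is no substantive obstacle here; the content of the lemma is simply that, by dualising the roles of ``operator'' and ``vector'' via the evaluation pairing $(T,\zeta)\mapsto T\zeta$, a locally linearly dependent space $V\subseteq L(X,Y)$ with $\lDim V=r$ gives rise to a bounded-rank space $\widehat{X}\subseteq L(V,Y)$ with uniform rank bound~$r$. This duality is the reason the authors can subsequently invoke the literature on spaces of matrices of bounded rank (such as \cite{AtLl}) in the analysis of locally linearly dependent subspaces.
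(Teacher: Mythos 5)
Your proof is correct and is precisely the straightforward argument the paper has in mind (the authors explicitly omit the proof as routine): linearity of $\zeta\mapsto\widehat{\zeta}$ gives that $\widehat{X}$ is a subspace, and $\im\widehat{\zeta}=V\zeta$ gives the rank bound $\Dim V\zeta\le\lDim V=r$. Nothing further is needed.
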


Let $F(X)$ denote the ideal of $L(X)$ consisting of those linear mappings with finite-dimensional range
and let $X^*$ be the space of all linear functionals on~$X$.
For a vector $\xi\in X$ and a linear functional $f\in X^*$, let $\xi\otimes f\in F(X)$ denote the mapping
$(\xi\otimes f)(\eta)=f(\eta)\,\xi$, $\eta\in X$.

\smallskip
The first of the next two auxiliary results is standard while the second follows from \cite[Lemma 2.1]{BrSe}; see also~\cite{GoLaWo}.
\begin{lem}\label{form}
Let $\phi=\sum_{i=1}^m {M_{a_i,b_i}}\in\ElAX$. Suppose that $\{u_1, \ldots, u_n\}$ is a basis of $L(\phi)$.
Then there exists a basis $\{v_1, \ldots, v_n\}$ of $R(\phi)$ such that $\phi= \sum_{i=1}^n {M_{u_i,v_i}}$.
\end{lem}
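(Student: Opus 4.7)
The plan is to rewrite $\phi$ in terms of the basis $\{u_1,\ldots,u_n\}$ by a change-of-coordinates in the left slot, track where the $b_i$'s go, and then verify that the resulting right-slot coefficients form a basis of $R(\phi)$ rather than only a spanning set.

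First I would use that $\{u_1,\ldots,u_n\}$ is a basis of $L(\phi)=\spa\{a_1,\ldots,a_m\}$ to write each $a_i=\sum_{j=1}^n c_{ij}u_j$ for scalars $c_{ij}\in\CC$. Substituting into the given representation and interchanging the two summations gives
\[
\phi(x)=\sum_{i=1}^m a_ixb_i=\sum_{j=1}^n u_j\,x\,v_j\qquad(x\in A),
\]
where $v_j:=\sum_{i=1}^m c_{ij}b_i\in R(\phi)$. This gives a representation of the desired form with the correct left-hand coefficients and with $v_j\in R(\phi)$; spanning $\spa\{v_1,\ldots,v_n\}\subseteq R(\phi)$ is built in by construction.

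The remaining point, which is the only one that requires actual argument, is to show that the $v_j$'s are linearly independent, so that they form a basis of $R(\phi)$. I would argue by contradiction: if $\sum_{j=1}^n\alpha_jv_j=0$ with, say, $\alpha_n\neq 0$, then substituting $v_n=-\alpha_n^{-1}\sum_{j<n}\alpha_jv_j$ into the new representation yields
\[
\phi(x)=\sum_{j=1}^{n-1}\bigl(u_j-\alpha_n^{-1}\alpha_ju_n\bigr)\,x\,v_j\qquad(x\in A),
\]
so that $L(\phi)$ is spanned by the $n-1$ elements $u_j-\alpha_n^{-1}\alpha_ju_n$, contradicting $\Dim L(\phi)=n$. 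Here I am using the fact (standard in the dense-algebra setting, and recalled in the preliminaries) that $L(\phi)$ does not depend on the representation of $\phi$ and has dimension equal to $\ell(\phi)=n$.

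The main, and really only, obstacle is the independence step; everything else is a mechanical change of basis. That step boils down to the well-known consequence of Jacobson's density theorem that whenever $\{u_1,\ldots,u_n\}$ is linearly independent in $L(X)$ and $\sum_{j=1}^n u_jxv_j=0$ for every $x\in A$, one must have $v_1=\cdots=v_n=0$; the contradiction above is exactly an application of this fact in disguise, via the invariance of $\Dim L(\phi)$.
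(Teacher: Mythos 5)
The paper does not actually prove Lemma~\ref{form}: it is dismissed as ``standard'' and the proof is omitted, so there is no in-paper argument to compare yours with line by line. Your overall route --- expand each $a_i$ in the basis $\{u_1,\ldots,u_n\}$, regroup to obtain $\phi=\sum_j M_{u_j,v_j}$ with $v_j=\sum_i c_{ij}b_i$, then argue that the $v_j$ form a basis of $R(\phi)$ --- is the standard one. Note, however, that in the only situation in which the paper ever invokes the lemma, namely $m=n=\ell(\phi)$ so that $\{a_1,\ldots,a_n\}$ and $\{b_1,\ldots,b_n\}$ are both linearly independent, your argument collapses to pure linear algebra: the matrix $C=(c_{ij})$ is then square and invertible, so $\{v_1,\ldots,v_n\}$ is the image of the basis $\{b_1,\ldots,b_n\}$ under an invertible matrix and is automatically a basis of $R(\phi)$; neither the density theorem nor the contradiction argument is needed.

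Two points in your write-up do need attention. First, your concluding inference ``$\spa\{v_1,\ldots,v_n\}\subseteq R(\phi)$ plus linear independence implies basis of $R(\phi)$'' silently uses $\Dim R(\phi)=n$; you cite the standard fact $\Dim L(\phi)=\ell(\phi)=n$ but never its right-hand counterpart $\Dim R(\phi)=\ell(\phi)$, and without it your $v_j$ could a priori be an independent set spanning a proper subspace of $R(\phi)$. Second, for a genuinely redundant representation ($m>n$) the spaces $\spa\{a_1,\ldots,a_m\}$ and $\spa\{b_1,\ldots,b_m\}$ need not coincide with the representation-independent $L(\phi)$ and $R(\phi)$: for $\phi=M_{a,b_1}+M_{a,b_2}$ with $b_1,b_2$ linearly independent one has $\ell(\phi)=1$ while $\spa\{b_1,b_2\}$ is two-dimensional, and for $\phi=M_{a_1,b}+M_{a_2,b}$ with $a_1,a_2$ linearly independent one has $\ell(\phi)=1$ while $\spa\{a_1,a_2\}$ is two-dimensional. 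Hence your opening identification $L(\phi)=\spa\{a_1,\ldots,a_m\}$ and your later appeal to $\Dim L(\phi)=\ell(\phi)$ cannot both be taken for granted for an arbitrary representation. The honest version of the argument first reduces the given representation to one of minimal length (discarding dependent coefficients exactly as at the start of the proof of Proposition~\ref{trace-qu}) and only then performs your change of basis --- at which point, as noted above, everything is immediate.
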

\begin{lem}\label{free}
Let $X$ be a vector space and let $V_1, \ldots, V_k$ be finite-dimensional subspaces of~$L (X)$.
Then there exists $\zeta \in X$ such that $\Dim V_i \zeta= \lDim V_i$ for all $1 \leq i \leq k$.
\end{lem}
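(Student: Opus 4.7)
The plan is to reduce the problem from the potentially infinite-dimensional space $X$ to a finite-dimensional linear subvariety, where standard Zariski-density arguments over $\CC$ apply directly.

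First, I would invoke the definition of local dimension to select, for each $i\in\{1,\ldots,k\}$, a vector $\zeta_i\in X$ with $\Dim V_i\zeta_i=r_i:=\lDim V_i$, and set $Y=\spa\{\zeta_1,\ldots,\zeta_k\}$. Then $Y$ is a finite-dimensional subspace of $X$, and the search for a common separating vector is from now on carried out inside~$Y$.

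Next, for each $i$ I would show that
\[
U_i=\{\zeta\in Y:\Dim V_i\zeta=r_i\}
\]
is a nonempty Zariski-open subset of~$Y$. Fixing a basis $\{T_1,\ldots,T_{d_i}\}$ of $V_i$ and any basis of the finite-dimensional subspace $T_1Y+\cdots+T_{d_i}Y\subseteq X$, the matrix $M(\zeta)$ whose columns represent $T_1\zeta,\ldots,T_{d_i}\zeta$ has entries that are linear functions of $\zeta\in Y$. Since $\Dim V_i\zeta=\rk M(\zeta)$ and $r_i$ is the global maximum of this quantity over all of $X$, the condition $\Dim V_i\zeta=r_i$ coincides with $\rk M(\zeta)\ge r_i$, i.e.\ the non-vanishing of at least one $r_i\times r_i$ minor of $M(\zeta)$; this is a polynomial condition defining a Zariski-open subset of~$Y$. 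Nonemptiness follows at once from $\zeta_i\in U_i$.

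Finally, I would conclude by the elementary fact that finitely many nonempty Zariski-open subsets of an irreducible affine variety over an infinite field --- here, the finite-dimensional $\CC$-vector space $Y$ --- have nonempty intersection. Any $\zeta\in\bigcap_{i=1}^{k}U_i$ then satisfies $\Dim V_i\zeta=r_i$ simultaneously for all~$i$. The main subtlety, and essentially the only one, is picking the right reduction at the start: a naive induction on $k$ that moves a separating vector of $V_1\cup\cdots\cup V_{k-1}$ along a line towards a separating vector of $V_k$ works, but is just the one-dimensional instance of the same Zariski-density principle. Choosing $Y=\spa\{\zeta_1,\ldots,\zeta_k\}$ at the outset sidesteps the infinite-dimensional nature of $X$ and packages the whole argument cleanly.
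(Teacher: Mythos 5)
Your proof is correct. The paper supplies no argument of its own for this lemma, deferring instead to \cite[Lemma 2.1]{BrSe}, which is exactly the one-parameter instance of your Zariski-genericity argument (perturb along a line and avoid finitely many scalars) --- as you yourself observe --- so your route is essentially the same, merely packaged as a single genericity statement on the finite-dimensional subspace $Y=\spa\{\zeta_1,\ldots,\zeta_k\}$.
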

\begin{prop}\label{prop:phi-nilpotent}
Let $\phi=\sum_{i=1}^n \phi_i$, where $\phi_i\colon A \rightarrow L (X)$  are linear mappings satisfying $\phi_j (x) \phi_i (y) =0$ for all
$j \geq i$ and $x, y \in A$. Then $\prod_{t=1}^{n+1} \phi (x_t)=0$ for all $x_1, \ldots, x_{n+1} \in A$.
\end{prop}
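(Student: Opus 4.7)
The plan is to expand the product $\prod_{t=1}^{n+1}\phi(x_t)$ using the decomposition $\phi=\sum_{i=1}^n\phi_i$ and then track which of the resulting $n^{n+1}$ terms can survive the hypothesis $\phi_j(x)\phi_i(y)=0$ for $j\geq i$.

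First I would write
\[
\prod_{t=1}^{n+1}\phi(x_t)=\sum_{(i_1,\ldots,i_{n+1})\in\{1,\ldots,n\}^{n+1}}\phi_{i_1}(x_1)\phi_{i_2}(x_2)\cdots\phi_{i_{n+1}}(x_{n+1}).
\]
Then I would argue that a summand in this expansion is zero as soon as one of the consecutive factors $\phi_{i_k}(x_k)\phi_{i_{k+1}}(x_{k+1})$ vanishes. By the assumed vanishing condition, this factor is automatically zero whenever $i_k\geq i_{k+1}$; hence the only indices $(i_1,\ldots,i_{n+1})$ for which the corresponding product has any chance of being nonzero are those satisfying $i_1<i_2<\cdots<i_{n+1}$.

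The key combinatorial observation is that such a strictly increasing sequence of length $n+1$ cannot exist in $\{1,\ldots,n\}$ by the pigeonhole principle. Consequently every summand in the expansion vanishes, proving $\prod_{t=1}^{n+1}\phi(x_t)=0$.

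The argument is essentially a one-line observation once the expansion is written down; I do not anticipate any real obstacle. The only mild subtlety worth stating explicitly is that the hypothesis $\phi_j(x)\phi_i(y)=0$ for $j\geq i$ includes the diagonal case $j=i$, so repeated indices are also forbidden; this is precisely what forces \emph{strict} increase in the surviving sequences and makes the pigeonhole count tight at length $n+1$.
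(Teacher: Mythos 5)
Your proof is correct, and it takes a genuinely different route from the paper. The paper argues by induction on $n$: it peels off the last summand $\phi_n$, uses the identity $\prod_{t=1}^{r}\phi(x_t)=\prod_{t=1}^{r}\sum_{i=1}^{n-1}\phi_i(x_t)+\prod_{t=1}^{r-1}\sum_{i=1}^{n-1}\phi_i(x_t)\,\phi_n(x_r)$ (valid because $\phi_n(x)\phi_i(y)=0$ for all $i$), and then applies the induction hypothesis to the truncated map $\sum_{i=1}^{n-1}\phi_i$. Your argument instead expands the full product into $n^{n+1}$ monomials and observes that associativity kills any monomial containing a consecutive pair $\phi_{i_k}(x_k)\phi_{i_{k+1}}(x_{k+1})$ with $i_k\geq i_{k+1}$, so a surviving monomial would need a strictly increasing index sequence of length $n+1$ inside $\{1,\ldots,n\}$, which the pigeonhole principle rules out. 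Your version is more elementary and arguably more transparent: it makes visible exactly why $n+1$ factors are needed and why the bound is tight, whereas the inductive proof hides this inside the recursion. The paper's induction, on the other hand, is slightly more economical on notation and fits the style of the surrounding arguments. Both proofs are complete; your one explicit remark --- that the diagonal case $j=i$ of the hypothesis is what forces \emph{strict} increase --- is exactly the point that needs to be made, and you made it.
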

\begin{proof}
We proceed by induction on~$n$.   The case $n=1$ is trivial.
Assume that $n>1$ and the desired conclusion holds for $n-1$.
Since $\phi_n (x)\phi_i(y)=0$ for all~$i$ and all $x,y \in A$, we have for each~$n$,
\begin{equation*}
\phi (x )\phi(y)= \sum_{i=1}^{n-1} \phi_i (x)\, \sum_{i=1}^{n-1} \phi_i (y) + \sum_{i=1}^{n-1} \phi_i (x) \phi_n (y)
\end{equation*}
 and analogously for all $r$ and $x_1, \ldots, x_r \in A$,
 \begin{equation*}
 \prod_{t=1}^r \phi(x_t)= \prod_{t=1}^r \sum_{i=1}^{n-1} \phi_i (x_t)  + \prod_{t=1}^{r-1}\sum_{i=1}^{n-1} \phi_i (x_t) \phi_n(x_r).
 \end{equation*}
 The induction hypothesis implies that $\prod_{t=1}^n\sum_{i=1}^{n-1} \phi_i (x_t)=0$. Thus
 \begin{equation*}
\prod_{t=1}^n \phi (x_t)= \prod_{t=1}^{n-1}\sum_{i=1}^{n-1} \phi_i (x_t) \phi_n(x_n).
 \end{equation*}
 Consequently, $\prod_{t=1}^{n+1} \phi (x_t)=0$.
\end{proof}

The following consequence of the above proposition explains a local nilpotency property of elementary operators and
is in part the motivation for our main result in Section~\ref{sect:length-three} of this paper.

\begin{cor}\label{prop:elop-nilpotent}
Let $\phi=\sum_{i=1}^n {M_{a_i,b_i}}\in\ElAX$. Suppose that $b_ia_j=0$ for all $i \geq j$. Then $(\phi(x))^{n+1}=0$ for all $x\in A$.
\end{cor}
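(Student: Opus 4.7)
The plan is to recognise the corollary as an immediate specialisation of Proposition~\ref{prop:phi-nilpotent}. I would set $\phi_i = M_{a_i,b_i}$ so that $\phi = \sum_{i=1}^n \phi_i$, and then check that the composition hypothesis of the proposition is met: for any $x,y \in A$ and any $j \geq i$,
\[
\phi_j(x)\,\phi_i(y) = a_j\,x\,(b_j a_i)\,y\,b_i,
\]
which vanishes because the standing assumption $b_p a_q = 0$ for $p \geq q$ specialises, with $p=j$ and $q=i$ (so that $p \geq q$ since $j \geq i$), to $b_j a_i = 0$.

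Once this verification is done, Proposition~\ref{prop:phi-nilpotent} yields $\prod_{t=1}^{n+1} \phi(x_t) = 0$ for every choice of $x_1,\ldots,x_{n+1} \in A$. Specialising to $x_1 = \cdots = x_{n+1} = x$ then delivers $(\phi(x))^{n+1} = 0$, as required.

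There is essentially no obstacle: the only point where one could slip up is matching the index convention, since the hypothesis of the corollary is written as $b_i a_j = 0$ for $i \geq j$ while the proposition needs $\phi_j(x)\phi_i(y)=0$ for $j \geq i$; but after relabelling, these are literally the same condition. The proof is therefore a one-line deduction, which explains why the result is stated as a corollary rather than a theorem.
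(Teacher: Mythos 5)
Your proposal is correct and is precisely the intended derivation: the paper states the corollary as an immediate consequence of Proposition~\ref{prop:phi-nilpotent}, obtained by taking $\phi_i=M_{a_i,b_i}$ and noting that $\phi_j(x)\phi_i(y)=a_jx(b_ja_i)yb_i=0$ for $j\geq i$. Your index-matching check is accurate and the specialisation $x_1=\cdots=x_{n+1}=x$ completes the argument exactly as the paper envisages.
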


\section{Locally nilpotent elementary operators}\label{sect:locally-nil}

\noindent
The purpose of this section is to find necessary conditions for local nilpotency of an elementary operator.

Let  $\phi=\sum_{i=1}^n  {M_{a_i,b_i}}$ be an elementary operator on $A$ of length~$n$, where  $a_i, b_i \in L(X)$.
Let $\zeta\in X$ and $x \in A$ be such that $xV(\phi)\zeta\subseteq \mathbb C \zeta$.  Then
\be \label{01}
\phi (x)L(\phi)\zeta\subseteq L(\phi)\zeta.
\ee
Let $\pi\colon\CC \zeta \rightarrow \CC$, $\pi(\zeta)=1$ denote the canonical map.
Suppose moreover $\{a_1 \zeta, \ldots, a_n \zeta\}$ is linearly independent; in such a situation, we shall abbreviate
this set to $\zset a$, if convenient.
Let  $ M( \phi (x), \zset a)$ be the corresponding matrix representation of $\phi (x)$ with respect  to $\zset a$. Then
\be\label{02}
M( \phi (x), \zset a)= (\pi(x b_i a_j \zeta))_{1 \leq i, j \leq n}.
\ee
\begin{prop}\label{trace-qu}
Let $\phi=\sum_{i=1}^n {M_{a_i,b_i}}\in\ElAX$ be an elementary operator.
Suppose that for every $\zeta \in  X$ and $x \in A$ such that $xV(\phi)\zeta \subseteq \mathbb C \zeta$,
$\phi (x)_{|L(\phi) \zeta}$ is nilpotent.
Then $\sum_{i=1}^n b_i a_i =0$.
\end{prop}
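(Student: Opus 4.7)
The plan is to set $c = \sum_{i=1}^n b_i a_i$ and show $c\zeta = 0$ for every $\zeta \in X$, by exploiting the trace-zero identity for nilpotent matrices via the explicit matrix representation in equation~\eqref{02}. The key insight is that, in a cleverly chosen basis of $L(\phi)$, the trace of $\phi(x)_{|L(\phi)\zeta}$ computes precisely $\pi(xc\zeta)$, so the nilpotency hypothesis will force $\pi(xc\zeta) = 0$ for enough $x \in A$ to let Jacobson density conclude that $c\zeta = 0$.

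Fix $\zeta \in X$ and set $m = \Dim L(\phi)\zeta$; the case $m = 0$ is immediate since then $a_i\zeta = 0$ for each $i$, so $c\zeta = 0$. Otherwise, the first technical step is to replace $\{a_1,\ldots,a_n\}$ by a basis adapted to $\zeta$. I will pick a basis $\{u_1,\ldots,u_n\}$ of $L(\phi)$ such that $u_1\zeta,\ldots,u_m\zeta$ is a basis of $L(\phi)\zeta$ while $u_j\zeta = 0$ for $j > m$ (standard: start with $u_1,\ldots,u_m$, extend to a basis of $L(\phi)$, and adjust the extra vectors by subtracting combinations of the first $m$). Lemma~\ref{form} then produces a basis $\{v_1,\ldots,v_n\}$ of $R(\phi)$ with $\phi = \sum_{i=1}^n M_{u_i,v_i}$, and the conjugation formula $(v_iu_j) = P^{-1}(b_ia_j)P$ recorded in the preliminaries preserves the trace of the matrix of coefficients, so $\sum_{i=1}^n v_iu_i = c$. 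Since $u_j\zeta = 0$ for $j > m$, this collapses to $c\zeta = \sum_{i=1}^m v_iu_i\zeta \in V(\phi)\zeta$.

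For the contradiction, suppose $c\zeta \neq 0$. Using Jacobson density, I choose $x \in A$ whose restriction to the finite-dimensional subspace $V(\phi)\zeta$ is any linear map into $\mathbb{C}\zeta$ sending $c\zeta$ to $\zeta$; then $xV(\phi)\zeta \subseteq \mathbb{C}\zeta$, so the hypothesis of the proposition applies and $\phi(x)_{|L(\phi)\zeta}$ is nilpotent. Expanding
\[
\phi(x)(u_j\zeta) = \sum_i u_i(xv_iu_j\zeta) = \sum_{i \leq m} \pi(xv_iu_j\zeta)\,u_i\zeta
\]
(the indices $i > m$ drop because $u_i\zeta = 0$) shows that the matrix of $\phi(x)_{|L(\phi)\zeta}$ in the basis $\{u_j\zeta\}_{j \leq m}$ has trace
\[
\sum_{i=1}^m \pi(xv_iu_i\zeta) = \pi\Bigl(x\sum_{i=1}^m v_iu_i\zeta\Bigr) = \pi(xc\zeta) = 1,
\]
contradicting nilpotency.

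The delicate point will be the basis adjustment when $m < n$: without it, equation~\eqref{02} is inapplicable because $\{a_1\zeta,\ldots,a_n\zeta\}$ is dependent, and the trace-nilpotent identity is not immediately usable. Lemma~\ref{form} cuts this knot cleanly by restricting the problem to $L(\phi)\zeta$ while the conjugation identity guarantees that the diagonal sum is still exactly~$c$; with this in hand the argument becomes a standard density-plus-trace computation.
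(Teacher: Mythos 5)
Your argument is correct and follows essentially the same route as the paper's: both proofs rest on the observation that, for the matrix representation~\eqref{02}, $\Tr\bigl(\phi(x)_{|L(\phi)\zeta}\bigr)=\pi\bigl(x\sum_i b_ia_i\zeta\bigr)$ must vanish by nilpotency, combined with density to manufacture a suitable~$x$; the only cosmetic difference is that you dualize $c\zeta$ with a single $x$, whereas the paper extracts the coefficients of $\sum_i b_ia_i\zeta$ one basis vector of $\spa\{b_{i_t}a_{i_t}\zeta\}$ at a time. The one point to tidy up is that the statement does not assume $\ell(\phi)=n$, so $\{a_1,\ldots,a_n\}$ need not be a basis of $L(\phi)$ and the conjugation formula you invoke is stated only for minimal-length representations; either reduce first to such a representation (checking, as the paper does implicitly via its displayed re-expression of $\phi$, that the diagonal sum $\sum_i b_ia_i$ is unchanged) or, as in the paper, adjust only the coefficients $a_t$ with $t>\Dim L(\phi)\zeta$.
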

\begin{proof}
Suppose that there exist $\zeta\in X$   and $j\in \{1, \ldots, n\}$ such that  $b_j a_j\zeta \neq 0$.  Set $s= \Dim L(\phi)\zeta$.
Rearranging the order of the $a_i$'s, if necessary, we may assume that $\{a_1 \zeta, \ldots, a_s \zeta\}$ is linearly independent.
For $t>s$ we can thus write $a_t\zeta=\sum_{i=1}^s\lambda_{ti}a_i\zeta$ with unique $\lambda_{ti}\in\CC$, $s<t\leq n$.
Replacing the coefficients $a_t$, $t>s$ by
\[
a_t'=a_t-\sum_{i=1}^s\lambda_{ti}a_i,\quad s<t\leq n
\]
it is clear that $a_t \zeta=0$ for $t>s$. As it is easily verified that
\be
\sum_{i=1}^n {M_{a_i,b_i}}=\sum_{i=1}^s  M_{a_i, b_i+ \sum\limits_{t=s+1}^n \lambda_{ti} b_t}+ \sum_{t=s+1}^n  {M_{a_t',b_t^{}}} ,
\ee
we can write $\phi$ in this new representation assuming these two additional properties of the coefficients~$a_i$.
(This argument will be used repeatedly in the sequel.)

Choose $i_1, \ldots, i_r$ such that $\{b_{i_1} a_{i_1} \zeta, \ldots, b_{i_r} a_{i_r} \zeta\}$ is linearly independent, $r$ being maximal.
Fix $j \in \{1, \ldots, r\}$. Choose $x \in A$ such that
\be
xV(\phi)\zeta\subseteq\CC\zeta, \quad x(b_{i_j} a_{i_j}\zeta)=\zeta\quad\text{and}\quad x(b_{i_t} a_{i_t}\zeta)=0\quad(1\leq t\leq r,\; t\neq j).
\ee
Let $J= \{1, \ldots, s\}\setminus\{i_1, \ldots, i_r\}$. Write $b_i a_i \zeta=\sum_{t=1}^r \alpha_t^i b_{i_t} a_{i_t} \zeta$ for all $i \in J$.
Then $x b_{i} a_{i}\zeta= \alpha_j^i  \zeta$ for all $i\in J$.  Moreover,
\begin{equation*}
\phi (x)L( \phi) \zeta\subseteq L(\phi) \zeta \quad\text{and} \quad \Tr (\phi(x)_{|_{L(\phi) \zeta}})= 1+ \sum_{i \in J}\alpha_j^i,
\end{equation*}
where $\Tr$ denotes the trace.
By our hypothesis, we must have $1+ \sum_{i \in J}\alpha_j^i =0$. From
\begin{equation*}
\begin{split}
\sum_{i=1}^n b_i a_i \zeta= \sum_{i=1}^s b_i a_i \zeta &= \sum_{t=1}^r b_{i_t} a_{i_t} \zeta + \sum_{i \in J} b_i a_i \zeta\\
   &= \sum_{t=1}^r b_{i_t} a_{i_t} \zeta+ \sum_{i \in J} \sum_{t=1}^r \alpha_t^i b_{i_t} a_{i_t} \zeta
\end{split}
\end{equation*}
we obtain
\be
\sum_{i=1}^n b_i a_i \zeta=\sum_{t=1}^r \bigl(1+ \sum_{i \in J} \alpha_t^i\bigr) b_{i_t} a_{i_t} \zeta.
\ee
As a result, $\sum_{i=1}^n b_i a_i \zeta=0 $.

Since this entails that $\sum_{i=1}^n b_i a_i \zeta=0$ for every $\zeta\in X$, $\sum_{i=1}^n b_i a_i =0$ as desired.
\end{proof}
\begin{prop}\label{dimV=1}
Let $\phi\in\ElAX$ be an elementary operator such that $\Dim V(\phi)=1$.
Set  $\lDim L(\phi)=r$ and suppose that for every $\zeta \in  X$ and $x \in A$ such that $xV(\phi)\zeta\subseteq \mathbb C \zeta$,
$\phi (x)_{|L(\phi) \zeta}$ is nilpotent. Then $\phi$ admits  a representation of the form $\phi= \sum_{i=1}^n {M_{u_i,v_i}}$ with
 \be
 (v_iu_j)_{1\leq i, j \leq n}=  \left(
                                  \begin{array}{cc}
                                    T & 0 \\
                                    * & 0 \\
                                  \end{array}
                                \right),
 \ee
where $T$ is a strictly upper triangular matrix of order~$r$. Moreover, $\phi^* \phi=0$ and $(\phi (x))^{r+2}=0$ for all $x\in A$.
\end{prop}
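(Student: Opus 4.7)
The plan rests on the observation that $\Dim V(\phi)=1$: fix a nonzero $T_0\in L(X)$ and scalars $\alpha_{ij}$ with $b_i a_j = \alpha_{ij} T_0$, so the structural content of $\phi$ is encoded in the matrix $M = (\alpha_{ij}) \in M_n(\CC)$. The first task then reduces to producing an invertible $P\in M_n(\CC)$ such that $P^{-1}MP$ has the claimed block form; by the conjugation formula recalled in Section~\ref{sect:pres}, such a $P$ immediately delivers the sought representation $\phi = \sum_{i=1}^n M_{u_i, v_i}$ with the required $(v_i u_j)$.

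First I would apply Lemma~\ref{free} to $L(\phi)$ together with $\spa\{T_0\}$ to select $\zeta\in X$ satisfying $\Dim L(\phi)\zeta = r$ and $T_0\zeta \neq 0$. After reordering so that $\{a_1\zeta,\ldots,a_r\zeta\}$ is linearly independent, the basis-change trick from the proof of Proposition~\ref{trace-qu} replaces each $a_j$ ($j>r$) by $a_j' = a_j - \sum_{k\leq r}\lambda_{jk}a_k$ (with the corresponding adjustments to the $b_i$) so that $a_j'\zeta = 0$. Evaluating the updated identity at $\zeta$ and using $T_0\zeta\neq 0$ forces the new scalars $\alpha_{ij}$ to vanish for every $j>r$, so the last $n-r$ columns of the updated matrix are zero.

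Next I would prove that the top-left $r\times r$ block $A = (\alpha_{ij})_{1\leq i,j\leq r}$ of the updated matrix is nilpotent. A direct computation shows that for $j\leq r$ and any $x\in A$ with $xT_0\zeta = \lambda\zeta$ one has $\phi(x)a_j\zeta = \lambda\sum_{i\leq r}\alpha_{ij} a_i\zeta$, so the matrix of $\phi(x)_{|L(\phi)\zeta}$ in the basis $\{a_i\zeta\}_{i\leq r}$ equals $\lambda A$. Density of $A$ in $L(X)$ produces such $x$ for any $\lambda\in\CC$, and choosing $\lambda\neq 0$ the hypothesis forces $A$ to be nilpotent. A Jordan form then gives an invertible $Q\in M_r(\CC)$ with $Q^{-1}AQ$ strictly upper triangular, and conjugating the full matrix by the block-diagonal $P = \bigl(\begin{smallmatrix}Q & 0 \\ 0 & I_{n-r}\end{smallmatrix}\bigr)$ leaves the zero columns intact and brings the top-left block into the desired $T$, yielding the representation claimed.

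For the trailing two claims, a short expansion gives $\phi^*\phi(x) = \Tr(M^2)\,T_0 x T_0$; using the block form just obtained one reads off $\Tr(M^2)=\Tr(T^2)=0$ by strict upper triangularity, whence $\phi^*\phi=0$. For $(\phi(x))^{r+2}=0$ I would decompose $\phi = \psi_1+\psi_2+\cdots+\psi_{r+1}$ with $\psi_1 := \sum_{i>r}M_{u_i, v_i}$ (bundling every term whose products fall in the $*$-block) and $\psi_{k+1} := M_{u_k, v_k}$ for $1\leq k\leq r$; the conditions $\psi_j(x)\psi_i(y) = 0$ for $j\geq i$ demanded by Proposition~\ref{prop:phi-nilpotent} follow, respectively, from the bottom-right zero block (for $\psi_1\psi_1$), the top-right zero block (for $\psi_{k+1}\psi_1$), and strict upper triangularity of $T$ (for $\psi_{k+1}\psi_{l+1}$ with $k\geq l$), and that proposition then delivers $(\phi(x))^{r+2}=0$. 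The main subtlety is precisely this sharpened bound: a term-by-term application of Corollary~\ref{prop:elop-nilpotent} or of Proposition~\ref{prop:phi-nilpotent} only yields $(\phi(x))^{n+1}=0$, and one must recognise that the $n-r$ bottom-$*$ summands can be absorbed into the single $\psi_1$ with $\psi_1^{\,2}=0$, reducing the effective number of summands from $n$ to $r+1$.
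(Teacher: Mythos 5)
Your proposal is correct and follows essentially the same route as the paper: choose $\zeta$ with $\Dim L(\phi)\zeta=r$ and $V(\phi)\zeta\neq 0$, normalise so that $a_j\zeta=0$ for $j>r$ (which kills the last $n-r$ columns because $\Dim V(\phi)=1$), use the nilpotency hypothesis to triangularise the top-left $r\times r$ block, and then decompose $\phi$ to obtain $(\phi(x))^{r+2}=0$. The only (harmless) variations are that you feed an $(r+1)$-term decomposition directly into Proposition~\ref{prop:phi-nilpotent} where the paper uses the two-term split $\phi=\phi_1+\phi_2$ with $(\phi(x))^k=(\phi_1(x))^k+\phi_2(x)(\phi_1(x))^{k-1}$, and that you make the trace computation behind $\phi^*\phi=0$ explicit.
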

\begin{proof}
Write $\phi= \sum_{i=1}^n  {M_{a_i,b_i}}$. It follows from Lemma~\ref{free} that there exists $\zeta \in X$ such that $V(\phi)\zeta\neq0$
and $\Dim L(\phi) \zeta =r$. As in the proof of Proposition~\ref{trace-qu}, we can suppose that $\{a_1 \zeta, \ldots, a_r \zeta\}$ is linearly independent
and that $a_k \zeta=0$ for every $k >r$. It then follows from our assumption on $V(\phi)$ that  $R (\phi) a_k =0$ for all $k >r$.
Let $s,t \in \{ 1, \ldots, n \}$ be such that $b_s a_t \neq 0$. Then $V(\phi)= \CC\, b_s a_t $. Pick $x \in A$ with the property that $x b_s a_t \zeta= \zeta$.
Then $\phi (x) \bigl(\spa \{a_1 \zeta, \ldots, a_r \zeta\}\bigr) \subseteq \spa \{a_1 \zeta, \ldots, a_r \zeta\}$ and we have
\be
M( \phi (x), \zset a)= (\pi(x b_i a_j \zeta))_{1 \leq i, j \leq r}.
\ee
Choose a basis $\{u_1 \zeta, \ldots, u_r \zeta\}$ of $L(\phi) \zeta$ such that $M( \phi(x), \zset u)$ is upper triangular.
Write $u_i \zeta = \sum_{t=1}^r \alpha^i_t a_t \zeta$.
Replacing $u_i$ by $u_i=\sum_{t=1}^r \alpha^i_t a_t$, if necessary, we have
$\spa \{u_1, \ldots, u_r\} = \spa\{a_1, \ldots, a_r\}$. For $k >r$, set $u_k= a_k$ and write $\phi=\sum_{i=1}^n {M_{u_i,v_i}}$
for some suitable $v_1, \ldots, v_n \in L(X)$.
It is clear that the matrix $(v_iu_j)$ has the desired form. Next observe that $\phi$ is the sum of two elementary operators
$\phi_1$, $\phi_2$, such that  $\ell( \phi_1)=r$,  $\ell(\phi_1)+ \ell(\phi_2)=\ell(\phi)$  and $R(\phi) L(\phi_2)=0$. Therefore
\be
(\phi(x))^k= (\phi_1(x))^k+ \phi_2(x) (\phi_1 (x))^{k-1}\quad (k \in \N, \; x \in A).
\ee
It is now easy to get the final assertion.
\end{proof}

The main result of this section gives a description of locally nilpotent elementary operators
where the coefficient spaces have maximal local dimension.

\begin{thm}\label{Ger-qu}
Let $\phi\in\ElnAX$ be an elementary operator of length~$n$ such that $\lDim L (\phi)=n$.
Suppose that for every $\zeta \in  X$ and $x \in A$ such that $xV(\phi)\zeta\subseteq \mathbb C \zeta$,  $\phi (x)_{|L(\phi) \zeta}$ is nilpotent.
Then $\lDim V (\phi) \leq \frac{n(n-1)}{2}$. Moreover, if $ \lDim V (\phi)= \frac{n(n-1)}{2}$, then $\phi$ admits  a representation of the form
$\phi= \sum_{i=1}^n {M_{u_i,v_i}}$, where $v_i u_j =0$  for every $i\geq j$. In particular, $\Dim V (\phi)= \frac{n(n-1)}{2}$.
\end{thm}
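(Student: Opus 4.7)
The plan is to invoke Gerstenhaber's theorem on maximal nilpotent subspaces of $M_n(\CC)$ after encoding the local nilpotency hypothesis as such a subspace at a single well-chosen vector. By Lemma~\ref{free} I pick $\zeta_0\in X$ realising simultaneously $\Dim L(\phi)\zeta_0=n$ (possible since $\lDim L(\phi)=n$) and $\Dim V(\phi)\zeta_0=\lDim V(\phi)=:m$. Then $\{a_1\zeta_0,\ldots,a_n\zeta_0\}$ is a basis of $L(\phi)\zeta_0$ and, by~\eqref{01}--\eqref{02}, the restriction $\phi(x)_{|L(\phi)\zeta_0}$ is represented by the matrix $N_x=\bigl(\pi(xb_ia_j\zeta_0)\bigr)_{ij}$ for every admissible $x\in A$ (that is, one with $xV(\phi)\zeta_0\subseteq\CC\zeta_0$), and is nilpotent by assumption. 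Density of $A$ shows that the assignment $\mu\mapsto\bigl(\mu(b_ia_j\zeta_0)\bigr)_{ij}$ identifies the resulting collection $\mathcal{N}_{\zeta_0}=\{N_x\}$ with $(V(\phi)\zeta_0)^*$, so $\mathcal{N}_{\zeta_0}$ is an $m$-dimensional nilpotent subspace of $M_n(\CC)$, and Gerstenhaber's theorem yields $m\leq\tfrac{n(n-1)}{2}$.

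Assume now $m=\tfrac{n(n-1)}{2}$. The equality case of Gerstenhaber's theorem tells us that $\mathcal{N}_{\zeta_0}$ is, in some basis of $\CC^n$, the full space of strictly upper triangular matrices. I choose a basis $\{u_1\zeta_0,\ldots,u_n\zeta_0\}$ of $L(\phi)\zeta_0$ adapted to the corresponding complete flag, and lift it to a basis $u_1,\ldots,u_n$ of $L(\phi)$ via the bijection $T\mapsto T\zeta_0$ (an isomorphism because $\Dim L(\phi)\zeta_0=n=\Dim L(\phi)$). Lemma~\ref{form} furnishes $v_1,\ldots,v_n$ with $\phi=\sum_{i=1}^n M_{u_i,v_i}$, and in this new representation the matrix $\bigl(\pi(xv_iu_j\zeta_0)\bigr)_{ij}$ is strictly upper triangular for every admissible $x$; density of $A$ then forces $v_iu_j\zeta_0=0$ whenever $i\geq j$, exactly as in the proof of Proposition~\ref{dimV=1}.

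The hard part is to promote this pointwise identity to the operator identity $v_iu_j=0$ for $i\geq j$. Once that is done, $V(\phi)=\spa\{v_iu_j:i<j\}$ so $\Dim V(\phi)\leq\tfrac{n(n-1)}{2}$, and combining with $\lDim V(\phi)=\tfrac{n(n-1)}{2}$ produces the last assertion $\Dim V(\phi)=\tfrac{n(n-1)}{2}$. For the lift I would repeat the construction at every $\zeta\in X$ for which $\Dim L(\phi)\zeta=n$ and $\Dim V(\phi)\zeta=m$: each such $\zeta$ produces another $m$-dimensional nilpotent subspace $\mathcal{N}_\zeta\subseteq M_n(\CC)$, which by Gerstenhaber is the full space of matrices strictly upper triangular with respect to a unique complete flag in $\CC^n$. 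Because the normaliser of the strictly upper triangular matrices in $GL_n(\CC)$ is the Borel subgroup of upper triangular invertible matrices, this flag is canonically determined by $\mathcal{N}_\zeta$ and depends algebraically on $\zeta$; since at $\zeta_0$ it coincides with the standard flag in $\{u_i\zeta_0\}$, a Zariski-density argument should force it to remain the flag of $\{u_i\zeta\}$ for a family of $\zeta$'s rich enough to separate $V(\phi)$. Density of $A$ applied at each such $\zeta$ then yields $v_iu_j\zeta=0$ for $i\geq j$ on that family, whence the operator identity $v_iu_j=0$.
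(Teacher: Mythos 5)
Your first two paragraphs match the paper's argument almost exactly: choose $\zeta_0$ by Lemma~\ref{free} so that $\Dim L(\phi)\zeta_0=n$ and $\Dim V(\phi)\zeta_0=\lDim V(\phi)$, identify the matrices of $\phi(x)_{|L(\phi)\zeta_0}$ for admissible $x$ with an $m$-dimensional nilpotent subspace of $M_n(\CC)$, invoke Gerstenhaber for the bound, and in the equality case produce $u_1,\dots,u_n$ and, via Lemma~\ref{form}, $v_1,\dots,v_n$ with $v_iu_j\zeta_0=0$ for $i\geq j$. Up to that point the proposal is correct and is the same route the paper takes.

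The final step, however, is a genuine gap, and it is exactly the step that occupies the second half of the paper's proof. Your plan is to argue that the Gerstenhaber flag attached to $\mathcal{N}_\zeta$ is the same for all good $\zeta$ because it ``depends algebraically on $\zeta$'' and agrees with the standard flag at $\zeta_0$. Algebraic or continuous dependence together with agreement at a single point does not force constancy: a non-constant morphism into the flag variety attains any prescribed value on a proper closed subset, so knowing the flag at $\zeta_0$ says nothing about the flag at other points. There is also an identification problem you pass over: $\mathcal{N}_\zeta$ determines a flag in $L(\phi)\zeta\cong\CC^n$, and comparing flags at different $\zeta$ requires pulling back through the evaluation $T\mapsto T\zeta$, which itself varies with $\zeta$; a mechanism forcing the pulled-back flag in $L(\phi)$ to be independent of $\zeta$ is precisely what is missing. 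The paper obtains the operator identity differently: it first proves the claim $R(\phi)u_1=0$ (so the entire first column $v_iu_1$ vanishes as operators, not merely at $\zeta_0$) by a delicate contradiction argument --- assuming $R(\phi)u_1\zeta'\neq0$, recursively modifying the coefficients $u_i,v_i$, replacing $\zeta'$ by $\zeta+\lambda\zeta'$ to keep $\{v_i'u_j'\zeta':i<j\}$ independent, and finally constructing $x\in A$ with $\phi(x)u_1'\zeta'=u_1'\zeta'$, contradicting nilpotency --- and then inducts on the length by passing to $\psi=\sum_{i=2}^n M_{u_i,v_i}$, which requires verifying that $\psi$ inherits the local nilpotency hypothesis. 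None of this is routine, so as written your proposal establishes the inequality $\lDim V(\phi)\leq\frac{n(n-1)}{2}$ and the pointwise vanishing at $\zeta_0$, but not the representation with $v_iu_j=0$ for $i\geq j$ nor the equality $\Dim V(\phi)=\frac{n(n-1)}{2}$.
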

\begin{proof}
Write $\phi= \sum_{i=1}^n  {M_{a_i,b_i}}$, set $\lDim V(\phi)= r$ and  choose $\zeta \in X$ such that
$\Dim V(\phi)\zeta= r$ and $\Dim L(\phi)\, \zeta =n$
(Lemma~\ref{free}). Let $\{ b_{i_t} a_{j_t} \zeta \}_{1 \leq t \leq r }$ be a basis of $V(\phi)\zeta$.
Pick $x_1, \ldots, x_r \in A$ with $x_k b_{i_t} a_{j_t} \zeta = \delta_{kt} \zeta$, for each $1 \leq t, k \leq r$.
Let $N$ be the vector subspace of $ M_n(\mathbb C)$ generated by $M( \phi (x_t), \zset a)$.
Then $N$ is nilpotent and $\Dim N=r$. Applying Gerstenhaber's theorem on maximal spaces of nilpotent matrices~\cite{Ger},
we infer that $r \leq \frac{n(n-1)}{2}$ and if $r=  \frac{n(n-1)}{2}$,
there exists a basis $B$ of $L(\phi) \zeta$ such that $M(\phi (x), B)$ is upper triangular for each $x \in \spa \{ x_1, \ldots, x_r\}$.
Next suppose that $r=  \frac{n(n-1)}{2}$  and set $B= \{u_1 \zeta, \ldots, u_n \zeta  \}$. Then $L (\phi)=\spa \{u_1, \ldots u_n  \} $.
Applying Lemma~\ref{form}, we get the existence of a basis $\{v_1, \ldots, v_n  \}$ of $R (\phi)$ such that $\phi= \sum_{i=1}^n {M_{u_i,v_i}}$.
Since $M( \phi (x), B)= (\pi(x v_i u_j \zeta))_{1 \leq i,j \leq n}$, we have $\pi(x v_i u_j \zeta)=0$ for all $i \geq j$ and $x \in \spa \{x_1, \ldots, x_r \}$,
thus  $v_i u_j \zeta =0$ for all $i \geq j$. Since $\Dim V(\phi)\zeta= \frac{n(n-1)}{2}$, the set $ \{ v_i u_j \zeta: i <j \} $ is a basis of $V(\phi)\zeta$.

Next we shall show by induction on $n$ that $\Dim V=\frac{n(n-1)}{2}$.  The case $n=1$ is trivial.
Suppose that the desired result is true for elementary operators of length $n-1$, $n>1$.
We first claim that $R(\phi) u_1 = 0$. To prove the claim suppose that there exists $\zeta' \in X$ such that $R(\phi) u_1 \zeta' \neq 0$.
Let $i_1$ be maximal (among $1, \ldots, n $) such that $v_{i_1}u_1 \zeta' \neq 0$. It  is clear that  $i_1 >1$.
Let $i_2$ be maximal such that $\{ v_{i_1} u_1 \zeta', v_{i_2} u_1 \zeta' \}$ is linearly independent.
If $i_2 < t <i_1$, replace $v_t$ by $v'_t= v_t- \alpha v_{i_1}$
and $u_{i_1}$  by $u'_{i_1}= u_{i_1}+ \alpha u_t$ such that $v'_t u_1 \zeta'=0$.
We recursively find $i_1> \ldots > i_r$ and define $u'_i, v'_i$ such that $u'_1=u_1$,  $\phi= \sum M_{u'_i,v'_i}$,
$v'_j u'_i \zeta=0$ for all $j \geq i$, the sets $\{ v'_i u'_j \zeta: i <j\}$ and
$\{ v'_{i_1} u'_1 \zeta', \ldots, v'_{i_r} u'_1 \zeta' \}$ are linearly independent   and if $t \not\in \{i_1, \ldots, i_r \}$, $v'_t u'_1 \zeta'=0$.
Choose a non-zero scalar $\lambda$ such that $ \{ v'_iu'_j (\zeta+ \lambda \zeta'): i <j \}$ is linearly independent.
Replacing $\zeta'$ by $\zeta+ \lambda \zeta'$ if needed,  we may assume with no loss of generality that $ \{ v'_i u'_j \zeta': i <j \}$ is linearly independent.
Thus, by the above argument,  there exists $c_1 \in L(\phi)$ such that $R(\phi)c_1 \zeta'=0$. In particular, $c_1 \not\in \CC u'_1$.
Write $c_1= \sum _{i=1}^n \beta_i u'_i$. Since $v'_1 c_1 \zeta'=0$ and $\{ v'_1u'_2 \zeta', \ldots, v'_1 u'_n \zeta'\}$ is linearly independent,
we infer that $ \beta_1 v'_1 u'_1 \zeta'\neq 0$. Consequently, $i_r=1$. We get a contradiction by choosing  $x \in A$ such that
$x v'_{i_t}u'_1 \zeta'=0$ for $t=1, \ldots, r-1$ and $x v'_1 u'_1 \zeta'=\zeta'$ (indeed, we have $\phi (x)u'_1 \zeta'= u'_1 \zeta'$).
The claim is proved. Now set $\psi= \sum_{i=2}^n {M_{u_i,v_i}}$. Then for every $x \in A$ and $\zeta' \in X$ such that
$\psi(x) V(\psi) \zeta' \subseteq \CC \zeta'$, the restriction of $\psi (x)$ to $L(\psi) \zeta'$ must be nilpotent; otherwise,
there exists $y \in A$ such that $yV(\phi)\zeta' \subseteq \CC \zeta'$ and $y_{|V(\psi)\zeta'}= x _{|V(\psi)\zeta'}$ such that
$\phi (y)_{|V(\phi)\zeta'}$ is not nilpotent.
Our induction hypothesis yields the desired result.
\end{proof}
\begin{prop}\label{ldimV=1}
Let $\phi= \sum_{i=1}^n  {M_{a_i,b_i}}$ be an elementary operator of length $n$ such that $\lDim  L (\phi)=n$ and $\lDim V(\phi)=1$.
Suppose that for every $\zeta \in  X$ and $x \in A$ such that $xV(\phi)\zeta\subseteq\mathbb C \zeta$,
$\phi (x)_{|L(\phi) \zeta}$ is nilpotent. Then $\Dim V (\phi) \leq \frac{n(n-1)}{2}$. Moreover, if $ \Dim V(\phi)= \frac{n(n-1)}{2}$,
then $\phi$ admits  a representation of the form $\phi= \sum_{i=1}^n{M_{u_i,v_i}}$ where $v_i u_j =0$  for every $i\geq j$.
\end{prop}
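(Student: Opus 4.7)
The plan is to mimic the proof of Theorem~\ref{Ger-qu}, but to extract the necessary space of nilpotent matrices by \emph{varying the vector} $\eta\in X$ rather than the test element $x\in A$. The cases $\Dim V(\phi)\leq 1$ are either trivial ($V(\phi)=0$) or immediately reducible to Proposition~\ref{dimV=1} (which already delivers the desired triangular representation), so I assume $\Dim V(\phi)\geq 2$. A short rank argument---equivalent to applying Lemma~\ref{mbr} with $r=1$ and then the standard description of rank-one subspaces---shows that $\lDim V(\phi)=1$ together with $\Dim V(\phi)\geq 2$ forces $V(\phi)\subseteq \xi\otimes X^*$ for some fixed $\xi\in X\setminus\{0\}$: any two linearly independent $T_1,T_2\in V(\phi)$ satisfy $T_2\eta\in\CC T_1\eta$ for every $\eta$, and testing this relation on sums $\eta_1+\eta_2$ forces $T_1,T_2$ to be of rank one with a common image direction. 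I may therefore write $b_ia_j=\xi\otimes g_{ij}$ for some $g_{ij}\in X^*$, and $\Dim V(\phi)=\Dim G$ where $G:=\spa\{g_{ij}:1\leq i,j\leq n\}$.

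Next I would construct a nilpotent matrix subspace of $M_n(\CC)$. Since $\lDim L(\phi)=n$, there exists $\zeta_0\in X$ with $\Dim L(\phi)\zeta_0=n$. For any $\eta\in X$ with $\Dim L(\phi)\eta=n$, density of $A$ furnishes $x\in A$ with $x\xi=\eta$; this ensures $xV(\phi)\eta\subseteq\CC\eta$, and the matrix representation of $\phi(x)$ restricted to $L(\phi)\eta$ in the basis $(a_1\eta,\ldots,a_n\eta)$ is exactly $(g_{ij}(\eta))_{1\leq i,j\leq n}$, which by hypothesis is nilpotent. Define the linear map $\widetilde G\colon X\to M_n(\CC)$, $\eta\mapsto (g_{ij}(\eta))$, and set $N:=\widetilde G(X)$. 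To upgrade nilpotency to all of $N$, observe that for any $\zeta'\in X$ the set of $t\in\CC$ with $\Dim L(\phi)(\zeta_0+t\zeta')=n$ is cofinite (non-vanishing of a minor, i.e.\ of a polynomial in $t$), so $\bigl(\widetilde G(\zeta_0)+t\widetilde G(\zeta')\bigr)^n=0$ for all but finitely many $t$; reading off the $t^n$-coefficient of this polynomial identity in $M_n(\CC)$ gives $\widetilde G(\zeta')^n=0$. Hence every matrix in $N$ is nilpotent, and a routine duality calculation via the trace pairing identifies $\Dim N$ with $\Dim G=\Dim V(\phi)$.

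Gerstenhaber's theorem~\cite{Ger} now yields $\Dim V(\phi)=\Dim N\leq \frac{n(n-1)}{2}$. In the equality case, Gerstenhaber also provides an invertible $P\in M_n(\CC)$ such that the appropriate conjugate of $N$ is exactly the space of all strictly upper triangular matrices. Setting $u_i:=\sum_j P_{ij}a_j$ produces a basis of $L(\phi)$, and Lemma~\ref{form} yields $\{v_1,\ldots,v_n\}\subseteq R(\phi)$ with $\phi=\sum_i M_{u_i,v_i}$; since each $v_iu_j$ lies in $V(\phi)=\xi\otimes X^*$, one may write $v_iu_j=\xi\otimes g_{ij}^{(u,v)}$, and the matrix $(g_{ij}^{(u,v)}(\eta))$ is the corresponding conjugate of $(g_{ij}(\eta))$, hence strictly upper triangular for every $\eta\in X$. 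Consequently $g_{ij}^{(u,v)}=0$ and $v_iu_j=0$ whenever $i\geq j$.

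The step I anticipate to be the main obstacle is the passage from nilpotency of individual matrices $\widetilde G(\eta)$---a priori only meaningful for $\eta$ generic enough that $\{a_1\eta,\ldots,a_n\eta\}$ is linearly independent---to the nilpotency of the whole subspace $N\subseteq M_n(\CC)$. The polynomial-in-$t$ argument above circumvents the need to make a uniform basis choice, but it crucially depends on the preceding structural reduction $V(\phi)\subseteq \xi\otimes X^*$, which is what makes $(g_{ij}(\eta))$ into a well-defined linear function of $\eta$ in the first place.
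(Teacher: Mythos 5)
Your proposal is correct and follows essentially the same route as the paper: reduce via local linear dependence to $V(\phi)\subseteq\xi\otimes X^*$ (the paper cites \cite[Theorem 2.3]{BrSe} for this), turn $\eta\mapsto(g_{ij}(\eta))$ into a nilpotent linear subspace of $M_n(\CC)$ of dimension $\Dim V(\phi)$, and apply Gerstenhaber's theorem together with Lemma~\ref{form} in the equality case. The only difference is how nilpotency is propagated to the whole matrix space: you extract the leading coefficient of a one-parameter polynomial identity, whereas the paper perturbs a spanning family by a generic separating vector using \cite[Lemma 2.1]{BrSe} --- two versions of the same Zariski-density argument.
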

\begin{proof}
Choose $i, j$ such that $b_i a_j \neq 0$.
The case $\Dim V(\phi)=1$ follows from Proposition~\ref{dimV=1}, so suppose that $\Dim V(\phi)>1$.
Then there exists $k,l$ such that $b_k a_l \not \in \CC b_i a_j$.
We have $\{b_i a_j \zeta, b_k a_l \zeta\}$ linearly dependent for each $\zeta \in X$.
It follows from \cite[Theorem 2.3]{BrSe} that $b_i a_j$ has rank  one. Thus, there exists $\zeta_0 \in X$ and linear functionals
$f_{kl}$ such that $b_k a_l= \zeta_0 \otimes f_{kl}$ for all $1 \leq k, l \leq n$.  Set
$M(\zeta)= (f_{ij} (\zeta))_{1 \leq i, j \leq n}$ for every $\zeta \in X$. Clearly, $M(\zeta)+ M(\zeta')= M (\zeta+ \zeta')$.
Observe that if $\Dim L(\phi) \zeta=  n$, and $x \in A$ satisfies $x \zeta_0= \zeta$, then $x(V(\phi)) \zeta) \subseteq \CC \zeta$ and
$M( \phi (x), \zset a)= M(\zeta)$. Hence $M(\zeta)$ is nilpotent. Set $\Dim V(\phi)=r$. Then $\Dim \spa\{f_{ij}: 1 \leq i, j \leq n \} =r$.
Choose $\zeta_1, \ldots, \zeta_r$ with the property that the set $\{ M(\zeta_1), \ldots, M(\zeta_r) \}$ is linearly independent and
pick $\zeta \in X$ such that $\Dim L(\phi) \zeta=n$. It follows from \cite[Lemma 2.1]{BrSe} that for all but finitely many $r+1$-tuples
$(\lambda_0, \lambda_1, \ldots, \lambda_r) \in \CC^n$, $\Dim L(\phi) (\lambda_0 \zeta + \sum_{t=1}^r \lambda_t \zeta_t)=n$  and
$\{ M(\zeta_1 + \lambda_0 \zeta), \ldots, M(\zeta_r+ \lambda_0 \zeta) \}$ is linearly independent.
In particular, for all but finitely many $(\lambda_0, \lambda_1, \ldots, \lambda_r) \in \CC^n$, $M( \lambda_0 \zeta + \sum_{t=1}^r \lambda_t \zeta_t)$
is nilpotent. Let $N$ be the vector subspace of $ M_n(\mathbb C)$ generated by
\[
\{M( \lambda_0 \zeta + \sum_{t=1}^r \lambda_t \zeta_t): (\lambda_0, \lambda_1, \ldots, \lambda_r) \in \CC^n \}.
\]
Then it is easy to see that $N$ is nilpotent and $\Dim N \geq r$. Applying once again Gerstenhaber's theorem on maximal spaces of
nilpotent matrices~\cite{Ger}, we infer that $r \leq \frac{n(n-1)}{2}$ and if $r=  \frac{n(n-1)}{2}$,
there exists an invertible matrix $P \in M_n (\CC)$ such that $P^{-1} NP$ equals the subspace of nilpotent upper triangular matrices.
Suppose that $r = \frac{n(n-1)}{2}$ and let $B= \{u_1, \ldots, u_n\}$ be the basis corresponding to $P$ of $L(\phi)$.
Then $\phi = \sum_i {M_{u_i,v_i}}$ for a convenient basis $\{ v_i \}$ of $R (\phi)$. Set $v_j u_i = \zeta_0 \otimes g_{ij}$.
Then $g_{ij}=0$  for $i \geq j$. This completes the proof.
\end{proof}

The first non-trivial case allows a particularly nice description.

\begin{cor}\label{2-qu}
Let $\phi$ be an elementary operator of length~$2$. Suppose that for every $\zeta \in  X$ and $x \in A$ such that
$x (V (\phi)) \zeta \subseteq \mathbb C \zeta$,  $\phi (x)_{_{|L(\phi) \zeta}}$ is nilpotent.
Then $\phi=M_{a,b} +M_{c,d}$, where $ba=dc=bc=0$.
\end{cor}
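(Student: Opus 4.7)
The plan is to split into two cases according to the local dimension of $L(\phi)$. Writing $\phi=M_{a_1,b_1}+M_{a_2,b_2}$, the hypothesis $\ell(\phi)=2$ forces $\{a_1,a_2\}$ and $\{b_1,b_2\}$ to be linearly independent, so $\lDim L(\phi)\in\{1,2\}$. In both cases Proposition~\ref{trace-qu} applies and yields the ``trace'' identity $b_1a_1+b_2a_2=0$, which will be the main lever in the degenerate case.

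The main case is $\lDim L(\phi)=2$, where Theorem~\ref{Ger-qu} with $n=2$ applies directly and gives $\lDim V(\phi)\le\frac{n(n-1)}{2}=1$. If $V(\phi)=0$ the desired form is immediate with $(a,b,c,d)=(a_1,b_1,a_2,b_2)$. Otherwise $\lDim V(\phi)=1$ is the extremal case of Theorem~\ref{Ger-qu}, which produces a representation $\phi=M_{u_1,v_1}+M_{u_2,v_2}$ with $v_iu_j=0$ for all $i\ge j$, i.e.\ $v_1u_1=v_2u_1=v_2u_2=0$. Relabelling the summands and setting $(a,b,c,d)=(u_2,v_2,u_1,v_1)$ then delivers $ba=dc=bc=0$ exactly as required.

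The delicate case is $\lDim L(\phi)=1<\Dim L(\phi)$, since Theorem~\ref{Ger-qu} no longer applies. Here I would use the classification of $2$-dimensional locally linearly dependent spaces already invoked in the proof of Proposition~\ref{ldimV=1} via \cite[Theorem~2.3]{BrSe} to write $a_i=\xi\otimes f_i$ with a common nonzero $\xi\in X$ and linearly independent $f_1,f_2\in X^*$; the usual argument ruling out $\xi_1,\xi_2$ non-parallel is elementary. Then the identity $b_1a_1+b_2a_2=0$ from Proposition~\ref{trace-qu} becomes $b_1\xi\otimes f_1+b_2\xi\otimes f_2=0$, and linear independence of $\{f_1,f_2\}$ forces $b_1\xi=b_2\xi=0$. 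Consequently $b_ia_j=b_i\xi\otimes f_j=0$ for all $i,j$, so $V(\phi)=0$ and we land in the trivial subcase handled above. The main obstacle is recognising and discarding this degenerate configuration; once $\lDim L(\phi)=2$ is secured, Theorem~\ref{Ger-qu} specialised to $n=2$ carries the entire argument.
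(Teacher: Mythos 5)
Your proposal is correct and follows essentially the same route as the paper: split on $\lDim L(\phi)\in\{1,2\}$, settle the case $\lDim L(\phi)=2$ by specialising Theorem~\ref{Ger-qu} to $n=2$ (where $\lDim V(\phi)\le 1$ and the extremal case already yields the triangular representation, so the paper's additional appeal to Proposition~\ref{ldimV=1} is not needed), and reduce the locally linearly dependent case to the rank-one form $a_i=\zeta_0\otimes f_i$ via \cite[Theorem~2.3]{BrSe}. The only deviation is in that degenerate case, where you conclude $b_1\zeta_0=b_2\zeta_0=0$ from the trace identity of Proposition~\ref{trace-qu} together with the linear independence of $f_1,f_2$, whereas the paper constructs an explicit $x$ producing a non-nilpotent restriction; your finish is equally valid and slightly more economical.
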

\begin{proof}
Write $\phi= \sum_{i=1}^2 {M_{a_i,b_i}}$. Suppose first that $\lDim L (\phi)=2$. Then the desired result  follows from Theorem~\ref{Ger-qu}
and Proposition~\ref{ldimV=1}. Now suppose that $\{a_1, a_2\}$ is locally linearly dependent. Then, by \cite[Theorem 2.3]{BrSe},
there exists $\zeta_0 \in X$ and linear functionals $f_1, f_2$ such that $a_i= \zeta_0 \otimes f_i$.
We have
\be
\phi(x) \zeta= \sum_{i=1}^2 f_i (x b_i \zeta) \zeta_0.
\ee
Suppose for a moment that $b_i \zeta_0 \neq 0$ for some $i$. Since $\{f_1, f_2\}$ is linearly independent,
we can find $x \in A$ such that $f_i (xb_i \zeta_0)=1$ and $f_j (xb_j \zeta_0)=0 $ for $j \neq i$,
and we get a contradiction. Thus $b_i a_j=0$ for $i,j =1,2$.
\end{proof}

For each $y\in L(X)$, we define the \textit{point spectrum} $\sigma_p(y)$ by
\[
\sigma_p(y)=\{\lambda\in\CC:\lambda-y\text{ is not injective}\}.
\]
The following result reveals the relationship between the two conditions ``quasi-nilpotent" and ``of finite rank".
\begin{prop}\label{f.spectra}
Let $\phi= \sum_{i=1}^n  {M_{a_i,b_i}}$ be an elementary operator of length $n$, and let $N \in \N$. Suppose that
$\#\sigma_p (\phi (x)) < N$ for all $x \in A$. Then either $b_i a_j$ has finite rank for all $1 \leq i, j \leq n$ or
there exists a finite-codimensional subspace $Y$ of $X$ such that    for every $\zeta \in  Y$ and $x \in A$
satisfying $xV(\phi)\zeta\subseteq \mathbb C \zeta$,  $\phi (x)_{|L(\phi) \zeta}$ is nilpotent.
\end{prop}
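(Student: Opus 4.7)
My plan is to split the argument according to whether all products $b_i a_j$ have finite rank, dispatching the first case trivially and attacking the second by an eigenvalue-accumulation argument.

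If every $b_i a_j$ has finite rank, set $Y=\bigcap_{i,j}\ker(b_i a_j)$. As a finite intersection of finite-codimensional kernels this is itself finite-codimensional. For $\zeta\in Y$ we have $V(\phi)\zeta=0$, so the admissibility condition is vacuous, and the computation
\[
\phi(x)(a_k\zeta)=\sum_i a_i\, x(b_i a_k\zeta)=0
\]
shows that $\phi(x)_{|L(\phi)\zeta}$ is the zero operator, hence nilpotent.

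For the remaining case assume some $b_{i_0}a_{j_0}$ has infinite rank and suppose, for a contradiction, that no finite-codimensional $Y$ satisfies the conclusion. Let
\[
B=\{\zeta\in X:\exists\, x\in A\text{ with }xV(\phi)\zeta\subseteq\mathbb C\zeta\text{ and }\phi(x)_{|L(\phi)\zeta}\text{ not nilpotent}\}.
\]
Negating the conclusion is equivalent to saying that $B$ meets every finite-codimensional subspace of $X$. Using \eqref{01} and \eqref{02} together with the $\phi(x)$-invariance of $L(\phi)\zeta$ under admissibility, the restriction $\phi(x)_{|L(\phi)\zeta}$ is represented on a basis of $L(\phi)\zeta$ by a matrix all of whose eigenvalues lie in $\sigma_p(\phi(x))$. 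Hence for any $\zeta\in B$ with witnessing admissible $x_\zeta$ there is a nonzero $\lambda_\zeta\in\sigma_p(\phi(x_\zeta))$, and the rescaling $x_\zeta\mapsto tx_\zeta$ preserves admissibility and multiplies this eigenvalue by~$t$, so any prescribed nonzero value is attainable. I would then inductively select $\zeta_1,\ldots,\zeta_N\in B$ so that, for $\{v_\ell\}$ a basis of $V(\phi)$, the combined family consisting of the $\zeta_k$ and of $v_\ell\zeta_k$ is linearly independent in~$X$. By Jacobson density there is a single $x\in A$ that is simultaneously admissible for every $\zeta_k$ and that, by appropriate choice of the prescribed scalars on each block, realises pairwise-distinct nonzero eigenvalues $\lambda_1,\ldots,\lambda_N$ of the restrictions $\phi(x)_{|L(\phi)\zeta_k}$. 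Then $\{\lambda_1,\ldots,\lambda_N\}\subseteq\sigma_p(\phi(x))$ contradicts $\#\sigma_p(\phi(x))<N$.

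The main obstacle is precisely the inductive construction of the $\zeta_k$'s in general position. At each step one must avoid, for every basis element $v_\ell\in V(\phi)$, the preimage $v_\ell^{-1}(W)$ of the finite-dimensional subspace $W$ accumulated from previous choices; when $v_\ell$ has infinite rank these preimages can themselves have infinite codimension, and the bare hypothesis that $B$ meets every finite-codimensional subspace is not strong enough to escape them. To overcome this one must exploit the infinite-rank assumption on $b_{i_0}a_{j_0}$ together with the local-linear-independence reasoning used in Lemmas~\ref{mbr} and~\ref{free}, ensuring at each stage that $B$ still contains an element escaping the finite list of accumulated constraints.
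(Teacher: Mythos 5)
Your first case is fine (indeed superfluous: if every $b_ia_j$ has finite rank, the first alternative of the statement already holds and nothing more is required), but your second case has a genuine gap, which you yourself identify: your strategy needs $N$ ``bad'' vectors $\zeta_1,\ldots,\zeta_N$ in general position, and the inductive construction breaks down because at each step you must avoid preimages $v_\ell^{-1}(W)$ of finite-dimensional subspaces under operators $v_\ell$ that may have infinite rank, so these preimages may have infinite codimension, while your hypothesis only guarantees that $B$ meets every \emph{finite}-codimensional subspace. Acknowledging the obstacle does not remove it, and it is not a technicality: it is the reason this route fails.

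The idea you are missing is that one bad vector suffices. The paper fixes $Y$ explicitly \emph{before} arguing by contradiction: choose a maximal family $b_{i_1}a_{j_1},\ldots,b_{i_r}a_{j_r}$ linearly independent modulo the finite-rank operators, write every $b_ia_j=\sum_t\alpha_{ij}^{t}b_{i_t}a_{j_t}+y_{ij}$ with $y_{ij}$ of finite rank, and set $Y=\bigcap\ker y_{ij}$. If some $\zeta\in Y$ admits an admissible $x$ with $\phi(x)_{|L(\phi)\zeta}$ not nilpotent, normalise so that $\phi(x)a\zeta=a\zeta$ for some $a\in L(\phi)$. The infinite-rank hypothesis, via \cite[Lemma 2.1]{Nad}, produces $\zeta_1,\ldots,\zeta_N\in Y$ with the whole family $\{b_{i_k}a_{j_k}\zeta_t\}_{1\leq k\leq r,\,1\leq t\leq N}$ linearly independent; density then yields a single $y\in A$ with $yb_{i_k}a_{j_k}\zeta_t=\mu_t\,\pi(xb_{i_k}a_{j_k}\zeta)\,\zeta_t$ for distinct $\mu_1,\ldots,\mu_N$, and membership of $\zeta$ and the $\zeta_t$ in $Y$ propagates this relation from the $r$ representatives to \emph{all} products $b_ia_j$, whence $\phi(y)a\zeta_t=\mu_t a\zeta_t$ and $\#\sigma_p(\phi(y))\geq N$. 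Two points to absorb: the $\zeta_t$ need not be bad vectors at all --- they only need the joint independence of the $b_{i_k}a_{j_k}\zeta_t$, which comes from infinite rank rather than from your set $B$; and the specific choice of $Y$ is precisely what lets one control $y$ on all products by prescribing it on finitely many. Your rescaling remark ($x\mapsto tx$ scales the eigenvalue) appears in the paper only to normalise the single eigenvalue to $1$, not to manufacture $N$ distinct ones.
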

\begin{proof}
Suppose that there exists $i, j \in \{1, \ldots, n\}$ such that $b_i a_j\notin F(X)$ and choose $r>0$ maximal with
the property that  $\{b_{i_1} a_{j_1}, \ldots, b_{i_r} a_{j_r}\}$ is linearly independent  modulo~$F(X)$.
Then for each $i, j$, there exist scalars $\alpha_{ij}^{1}, \ldots, \alpha_{ij}^{r}$
and $y_{ij}\in F(X)$ such that $b_ia_j= \sum_{i=1}^r \alpha_{ij}^{t} b_{i_t}a_{j_t}+y_{ij}$. Set $Y= \bigcap\, \ker y_{ij}$.
Then it is easy to see that $Y$ has finite codimension. Suppose that there exists $\zeta \in Y$ and $x \in A$ satisfying
$xV(\phi)\zeta\subseteq \CC \zeta$ and $\phi (x)_{|L(\phi)\zeta}$ is not nilpotent. With no loss of generality, we may suppose
that there exists $u \in L(\phi)$ such that $(\phi(x)) u\zeta= u \zeta$. Set $xb_i u \zeta= \lambda_i \zeta$ and $a= \sum \lambda_i a_i$.
Then $u\zeta= a\zeta$. Applying \cite[Lemma 2.1]{Nad}, we can find $\zeta_1, \ldots, \zeta_N \in Y$ such that
$ \{b_{i_1} a_{j_1} \zeta_t, \ldots, b_{i_r} a_{j_r}\zeta_t\}_{1 \leq t \leq N}$ is linearly independent.
Let $\mu_1, \ldots, \mu_N$ be distinct numbers. Choose $y \in A$ such that
$y b_{i_k} a_{j_k} \zeta_t = \mu_t \pi(x b_{i_k} a_{j_k} \zeta) \zeta_t$.
Since $\zeta, \zeta_1, \ldots, \zeta_N$ lie in $Y$, for all $i,j$, $y b_i a_j \zeta_t =\mu_t (\pi x b_i a_j \zeta) \zeta_t$.
For each $c \in L(\phi)$, $y b_ic \zeta_t= \mu_t (\pi x b_i c \zeta) \zeta_t$. Now  $\phi (y) a \zeta_t= \sum a_iy b_ia \zeta_t=\mu_t a \zeta_t$.
In particular, $\{\mu_1, \ldots, \mu_N\} \subseteq \sigma_p (\phi (y))$, a contradiction.
\end{proof}

\section{Locally quasi-nilpotent elementary operators of length~$3$}\label{sect:length-three}

\noindent
In this section we shall pay special attention to the case of locally quasi-nilpotent elementary operators of length~$3$;
shorter lengths have been treated elsewhere, see, e.g.,~\cite{NaMa11}.

Suppose that $\phi(x)$ is quasi-nilpotent for every $x\in A$.
For every $x\in A$ and $\zeta\in X$ such that $xV(\phi)\zeta\subseteq\CC\zeta$, we have $\phi(x)L(\phi)\zeta\subseteq L(\phi)\zeta$.
As $\Dim L(\phi)\zeta$ is finite, $\phi(x)_{|L(\phi)\zeta}$ is nilpotent and we can apply the results of the previous section.
Assuming that $\ell(\phi)=n$, the best case occurs when $\lDim L(\phi)=n$, that is, when we have a separating vector for~$L(\phi)$.
If, in addition, $\lDim V(\phi)$ is maximal, then we obtain the best possible representation of~$\phi$; this was achieved in
Theorem~\ref{Ger-qu}. In the case $n=2$, $\lDim V(\phi)$ is at most~$1$
and therefore no complications can occur (Corollary~\ref{2-qu}). However, already in the case $n=3$, the gap between
$\lDim V(\phi)$ and $\lDim L(\phi)$ widens which results in further ``exceptional'' descriptions of $\phi$ as in the theorem
below. Moreover, local nilpotency is no longer sufficient, as is illustrated by an example in Remark~\ref{rem:2-vs-3}.

The proof of our main result is organised according to the three levels of difficulty:
$\lDim L(\phi)=3$, $\lDim L(\phi)=2$ and finally $\lDim L(\phi)=1$.

\begin{thm}\label{3-qu}
Let  $X$ be a complex vector space and let $A$ be a dense algebra of linear operators on~$X$.
Then  $\phi\in\ElthAX$ is a locally quasi-nilpotent elementary operator if and only if
there exists a representation of $\phi$ of the form  $\phi= \sum_{i=1}^3 {M_{u_i,v_i}}$, $u_i, v_i \in L(X)$
such that one of the following three cases occurs:

\smallskip
\begin{enumerate}
\item[{\rm (i)}] $v_i u_j=0$ for all $i \geq j$;
\smallskip
\item[{\rm (ii)}] $(v_iu_j)_{1 \leq i,j \leq 3}= \left(\begin{array}{ccc}
                                             0 & \zeta_1 \otimes f & 0 \\
                                             \zeta_0 \otimes f & 0 & \zeta_1 \otimes f \\
                                             0 & - \zeta_0 \otimes f & 0
                                           \end{array}\right)$,\\
                                           where  $ \{\zeta_0, \zeta_1\} \subseteq X $ is linearly independent  and  $f \in X^*$;
\smallskip
\item[{\rm (iii)}]  $(v_iu_j)_{1 \leq i,j \leq 3}= \left(\begin{array}{ccc}
                                             0 & \zeta_0 \otimes g & 0 \\
                                             \zeta_0 \otimes f & 0 & \zeta_0 \otimes g \\
                                             0 & - \zeta_0 \otimes f & 0
                                           \end{array}\right)$,\\
                                           where   $\zeta_0 \in X$  and  $\{f,g\} \subseteq X^*$ is linearly independent.
\end{enumerate}
\end{thm}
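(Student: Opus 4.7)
The sufficiency direction is the easy half. In case (i), Corollary~\ref{prop:elop-nilpotent} yields $\phi(x)^4 = 0$ for every $x\in A$, so each $\phi(x)$ is nilpotent and a fortiori quasi-nilpotent. For cases (ii) and (iii) a direct computation suffices: every entry of the matrix $(v_i u_j)_{1\le i,j\le 3}$ has rank at most one, and all entries share either a common range vector or a common linear functional, so iterating $\phi(x)$ is governed by matrix powers of this matrix. The zero diagonal together with the sign-alternating anti-symmetric cross pattern makes the small invariant subspace $\spa\{u_i\zeta_0\}$ (together with $\spa\{u_i\zeta_1\}$ in case (ii)) the only part on which $\phi(x)$ can act non-trivially, and a short trace computation there collapses the spectrum of $\phi(x)$ to $\{0\}$.

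For the necessity I would follow the three-case scheme announced in the section, split by $\lDim L(\phi) \in\{3,2,1\}$. In the case $\lDim L(\phi) = 3$, Lemma~\ref{free} furnishes a separating vector $\zeta$, so $\phi(x)_{|L(\phi)\zeta}$ is nilpotent whenever $xV(\phi)\zeta \subseteq \CC\zeta$, and Theorem~\ref{Ger-qu} gives $\lDim V(\phi)\le 3$: equality yields case (i) directly. If $\lDim V(\phi) < 3$, I would run the Gerstenhaber argument by hand on the at-most-two-dimensional pencil of $3\times3$ nilpotent matrices, enumerating simultaneous-conjugation normal forms and using Proposition~\ref{trace-qu} (which forces $\sum v_iu_i = 0$) to eliminate spurious shapes; what remains are exactly the patterns in (ii) and (iii).

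The case $\lDim L(\phi) = 2$ uses the Bre\v sar--\v Semrl classification of minimal locally linearly dependent triples from \cite{BrSe, MeSe}: a normal form for $\{u_1,u_2,u_3\}$ is available, and imposing local quasi-nilpotency forces algebraic relations on the $v_i$'s that place $\phi$ in case (ii) or (iii). The case $\lDim L(\phi) = 1$, which I expect to be the main obstacle, uses Lemma~\ref{mbr} to write $u_i = \zeta_0 \otimes g_i$ for a common $\zeta_0$; each $\phi(x)$ then has rank at most one, and local quasi-nilpotency becomes the bilinear identity $\sum_i g_i(xv_i\zeta_0) = 0$ for all $x \in A$. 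Combined with Proposition~\ref{f.spectra} to harvest enough separating vectors and a dimension count driven by the length-three hypothesis, this collapses $\phi$ onto the form (iii). The principal technical difficulty throughout is that the Gerstenhaber bound on nilpotent matrix spaces is tight but not canonical up to simultaneous conjugation, so separating which of the possible normal forms genuinely arise from a length-three locally quasi-nilpotent elementary operator will require careful bookkeeping via the trace condition of Proposition~\ref{trace-qu} applied to many choices of separating vector.
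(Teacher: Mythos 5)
Your skeleton (easy sufficiency; necessity split by $\lDim L(\phi)\in\{3,2,1\}$) matches the paper's, but two of your predictions are wrong and the genuinely hard step is missing. First, the case $\lDim L(\phi)=1$ is not ``the main obstacle'' and does not land in form (iii): writing $a_i=\zeta_0\otimes f_i$ gives $\phi(x)\zeta=\sum_i f_i(xb_i\zeta)\,\zeta_0$, so $\phi(x)$ has rank at most one and quasi-nilpotency forces $\sum_i f_i(xb_i\zeta_0)=0$ for all $x$; since $\{f_1,f_2,f_3\}$ is linearly independent (length three) and $A$ is dense, this forces $b_i\zeta_0=0$ for all $i$, hence $b_ia_j=0$ for all $i,j$ --- a degenerate instance of case (i). It is the easiest case. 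Second, in the case $\lDim L(\phi)=2$ the conclusion is not confined to (ii) or (iii): the paper picks $u_1\in L(\phi)$ with $u_1\zeta=0$ for a separating vector $\zeta$, reduces the remainder to a length-two operator handled by Corollary~\ref{2-qu}, and then shows $R(\phi)u_1=0$ or $v_3L(\phi)=0$; form (i) genuinely occurs here. (The route you propose via the classification of minimal locally linearly dependent triples is the one the authors explicitly mention in their closing remark as viable but longer.)

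The real gap is in the case $\lDim L(\phi)=3$, $\lDim V(\phi)=2$. Classifying the two-dimensional nilpotent subspace $N=\{M(\phi(x),\zset a): xV(\phi)\zeta\subseteq\CC\zeta\}$ of $M_3(\CC)$ up to conjugation (the paper quotes \cite[Proposition~3]{Fas}: it embeds either in the strictly upper triangular matrices or in the antisymmetric pattern) only gives you, for each separating vector $\zeta$, a basis of $L(\phi)$ \emph{depending on} $\zeta$ in which the scalar matrix $(\pi(xv_iu_j\zeta))$ has the right shape. That is a pointwise statement about the vectors $v_iu_j\zeta$, not the operator identities $v_iu_j=\zeta_0\otimes f$, etc., that the theorem asserts. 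The bulk of the paper's proof (Subcase~2.2) is devoted to exactly this globalisation: it introduces auxiliary nilpotent spaces $E(\gamma_1,\gamma_2,\gamma_3)$ built from all of $X$, shows that the distinguished subspace $\spa\{u_1,u_3\}$ does not depend on the separating vector, proves $v_iu_j=0$ off the antisymmetric pattern, and finally shows that $v_2u_1$ and $v_2u_3$ have a common kernel and rank one, which is what produces $\zeta_0,\zeta_1,f$ (resp.\ $\zeta_0,f,g$). Neither Proposition~\ref{trace-qu} (which only yields the vanishing diagonal $\sum_i v_iu_i=0$) nor ``careful bookkeeping via the trace condition'' supplies this step; without it your argument stops at a family of $\zeta$-dependent normal forms and does not reach the stated operator-level conclusion.
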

\begin{proof}
Set $\phi= \sum_{i=1}^3 {M_{a_i,b_i}}$, $a_i, b_i \in L(X)$.
Suppose first that  $ L(\phi)$ has a separating vector. We distinguish three cases.

\noindent
\textit{Case 1.} $\lDim V(\phi)=3$. This case follows from  Theorem~\ref{Ger-qu}.

\noindent
\textit{Case 2.} $\lDim V(\phi)=2$.    Pick  $\zeta \in X$ satisfying $\Dim L(\phi) \zeta=3$ and $\Dim V(\phi)\zeta= 2$ (Lemma~\ref{free}). Set
\be
N= \{ M(\phi (x), \zset a): x \in A,\  xV(\phi)\zeta\subseteq  \CC \zeta  \}.
\ee
Clearly, $N$ is a nilpotent subspace of $M_3 (\CC)$ of dimension~$2$.
Let $N'$ be a maximal subspace containing~$N$.
Then $N'$ is conjugate to one of the following subspaces by \cite[Proposition~3]{Fas} (see also~\cite{FoMa}):
\beq
\left\{\left( \begin{array}{ccc}
  0 & \alpha & \beta \\
  0 & 0 & \gamma \\
  0 & 0 & 0
\end{array}\right): \alpha, \beta, \gamma \in \CC \right \}
 \quad \text {or} \quad \left \{ \left( \begin{array}{ccc}
                                            0 & \beta & 0 \\
                                            \alpha & 0 & \beta \\
                                            0 & -\alpha & 0
                                          \end{array} \right): \alpha, \beta \in \CC \right \}.
\eeq
Thus, we distinguish two subcases.

\noindent
\textit{Subcase 2.1.}
There exists $\zeta \in X$ and a representation of $\phi$ of the form
$\phi= \sum_{i=1}^3 M_{u_i,v_i}$, $u_i\in L(\phi)$, $v_i \in R(\phi)$ such that $\Dim L(\phi) \zeta=3$,  $\Dim V(\phi)\zeta= 2$,
and for every $x \in A$ such that $xV(\phi)\zeta\subseteq\CC\zeta$, the matrix $(\pi (xv_iu_j \zeta))_{1 \leq i, j \leq 3}$ is upper triangular.
Then
\begin{equation*}
(v_iu_j \zeta)_{1 \leq i, j \leq 3}= \left(
                                        \begin{array}{ccc}
                                          0 & v_1u_2 \zeta & v_1u_3 \zeta\\
                                          0 & 0 & v_2u_3 \zeta \\
                                          0 & 0 & 0 \\
                                        \end{array}
                                      \right).
\end{equation*}
Clearly, we may assume that one of the sets $\{v_1u_2 \zeta, v_1u_3 \zeta\}$ or $\{v_1u_3 \zeta, v_2u_3 \zeta\}$  is linearly independent.
Using an argument similar to the one used in the proof of  Theorem~\ref{Ger-qu},
we show that $R(\phi) u_1=0$ or $v_3 L(\phi)=0$ and arrive at the desired conclusion.

\noindent
\textit{Subcase 2.2.}
For every $\zeta\in X$ such that $\Dim L(\phi)\zeta=3$ and
$\Dim V(\phi)\zeta= 2$,  there exists a representation of $\phi$ of the form
$\phi= \sum_{i=1}^3 M_{u_i,v_i}$, $u_i\in L(\phi)$, $v_i\in R(\phi)$  depending on~$\zeta$ such that,
for every $x\in A$ satisfying $xV(\phi)\zeta\subseteq\CC\zeta$, the matrix
$(\pi (xv_iu_j\zeta))_{1 \leq i, j \leq 3}$ has the form
\be\label{3.2}
  \left( \begin{array}{ccc}
   0 & \pi (xv_2 u_3 \zeta)& 0 \\
    \pi (xv_2 u_1 \zeta) & 0 & \pi (xv_2 u_3 \zeta) \\
     0 & -\pi (xv_2 u_1 \zeta) & 0
     \end{array} \right).
\ee
Then we have
\be
(v_iu_j \zeta)_{1 \leq i, j \leq 3}= \left(
                                        \begin{array}{ccc}
                                          0 & v_2 u_3\zeta & 0 \\
                                          v_2 u_1 \zeta & 0 & v_2u_3 \zeta \\
                                          0 & -v_2u_1\zeta & 0 \\
                                        \end{array}
                                      \right).
\ee
Now fix a separating vector  $\zeta $ of $L(\phi)$ such that  $V(\phi)\zeta$ has maximal dimension.
Set $ v_2 u_1 \zeta= \eta_1$ and $ v_2 u_3 \zeta= \eta_2$.
Pick  $\eta_3  \in X \setminus \spa \{ \eta_1, \eta_2\}$. Write $X= \spa \{\eta_1, \eta_2, \eta_3\} \oplus Y$, for some subspace $Y$ of~$X$.
Let  $p_1: X \rightarrow \spa\{\eta_1, \eta_2, \eta_3\} $ and $p_2: X \rightarrow Y$ be the natural projections.
Fix $\gamma_1, \gamma_2, \gamma_3 \in \CC\setminus\{0\}$. Let $\zeta' \in X$.  Choose $x_{ \zeta'} \in X$ satisfying
\begin{equation*}
x_{ \zeta'} \eta_i= \gamma_i \zeta', \quad x_{ \zeta'} p_2v_iu_j\zeta'=0\quad  (1 \leq i,j \leq 3).
\end{equation*}
Set
\[
M(\zeta', \gamma_1, \gamma_2, \gamma_3)= \bigl(\pi x_{\zeta'}v_iu_j \zeta' \bigr)_{1 \leq i,j \leq 3}
\]
and
\[
E (\gamma_1, \gamma_2, \gamma_3)= \{M(\zeta', \gamma_1, \gamma_2, \gamma_3): \zeta' \in X\}.
\]
Observe that  if $\zeta'$ is a separating vector of $L(\phi)$, we have
\begin{equation*}
M(\zeta', \gamma_1, \gamma_2, \gamma_3)= M\bigl(\phi(x_{ \zeta'}), \{u_1 \zeta',u_2\zeta' , u_3 \zeta'\}\bigr)
\end{equation*}
which is nilpotent. Since for every $\zeta' \in X$, $\zeta'+ \lambda \zeta$ is a separating vector of $L(\phi)$ for all
but finitely many $\lambda  \in \CC$,  we infer that $E (\gamma_1, \gamma_2, \gamma_3)$ is  a nilpotent subspace of $M_3 (\CC)$.
Next apply once again \cite[Proposition 3]{Fas}.

Suppose first that  $E(\gamma_1, \gamma_2, \gamma_3) $ has dimension $2$ for an open set $O$ of
values $(\gamma_1, \gamma_2, \gamma_3)$ of~$\CC^3$.
Then, for every $(\gamma_1, \gamma_2, \gamma_3) \in O$, $\zeta_1, \zeta_2, \zeta_3 \in X$ and $w_1, w_2, w_3 \in \{v_iu_j: 1 \leq i, j \leq 3\}$,
we must have $\det ((x_{\zeta_j}w_i \zeta_j)_{1 \leq i,j \leq 3})=0$. Let $(i,j) \not\in \{(1,2), (2,3), (2,1), (3,2)\}$.
Suppose for a moment that $v_i u_j \neq 0$. Then $\{v_iu_j, v_2u_1, v_2u_3\}$ is locally linearly dependent.
Since $v_iu_j \zeta=0$, $v_iu_j X \subseteq \spa \{ \eta_1, \eta_2\}$. Hence we may suppose that  $v_iu_j \zeta_2= \eta_1$ and
either $v_ iu_j \zeta_3= \eta_2$, or $v_iu_j \zeta_3=0$. Set $w_1= v_2 (\gamma_2 u_1- \gamma_1 u_3)$,
$w_2= v_i u_j$, $w_3= v_2u_1$,  and $\zeta_1= \zeta$.   Computing the determinant, and using the fact that
$x_{\zeta}v_2(\gamma_2 u_1- \gamma_1 u_3)\zeta=0$, we see that $v_2u_1 \zeta_2=0$, which is impossible since
we can replace $\zeta_2$ by $\zeta_2+ \lambda \zeta$. Thus $v_i u_j=0$.
Let $\zeta'$ be a separating vector of $L(\phi)$. By considering kernels, we see that we must have
$\gamma_2u_1-\gamma_1 u_3 \in \spa \{u_1', u_3'\}$ where $u_1'$, $u_3'$ now depend on $\zeta'$.
Therefore  $\spa \{u_1, u_3\}= \spa \{u_1', u_3'\}$.
Now it is easy to see that we can  put $u_i= u_i'$ for all~$i$. This implies that we must have $v_2u_1= - v_3u_2$ and $v_1u_2=v_2u_3$.

Now suppose that the set  of $(\gamma_1, \gamma_2, \gamma_3)$  for which  $\Dim E(\gamma_1, \gamma_2, \gamma_3) \neq 2$ is
dense in~$\CC^3$.  Then observe that $E(\gamma_1, \gamma_2, \gamma_3) $ must be triangularizable   for an open subset
$(\gamma_1, \gamma_2,\gamma_3)$ of $\CC^3$. By considering a common eigenvector, we see that we must have
 $\gamma_2u_1-\gamma_1 u_3 \in \spa \{u_1^{\zeta'}, u_3^{\zeta'}\}$ for every separating vector $\zeta'$ of $L(\phi)$.
As above, we deduce that we can put  $u_i= u_i^{\zeta'}$ for all $i$.

Next suppose that there exists $\zeta' \in X$ with $ v_2u_1 \zeta'= \eta_3$.  Choose $x \in A$ with $x \eta_3= \zeta$, $x \eta_2= \zeta'$
and $x \eta_1=0$.   Then $\phi (x)(u_1 \zeta'+u_2 \zeta)=u_1 \zeta'+u_2 \zeta$, a contradiction.
Hence $p_1 v_2u_1 X \subseteq \spa \{ \eta_1, \eta_2\}$. Analogously for $p_1v_2u_3$. We have thereby shown that
$v_iu_j X \subseteq \spa\{ \eta_1, \eta_2\}$ for every $i, j$.  Next choose $\zeta' \in X$ with $v_2u_1 \zeta'=0$
(this is possible since the dimension of $X$ is greater than~$2$). Suppose for a moment that $v_2u_3 \zeta' \neq 0$.
Pick $x \in A$ such that $x \eta_1= \zeta'$ and $x v_2u_3 \zeta'= \zeta$. Then $\phi(x) (u_1 \zeta+ u_2 \zeta')=u_1 \zeta+ u_2 \zeta'$,
a contradiction. Hence $v_2u_1$ and $v_2u_3$ have the same kernel. Since they are at most rank~$2$,
$ \spa\{v_2u_3, v_2u_1\}$ is generated by rank one operators. Thus we can assume that $v_2u_3, v_2u_1$ are rank one.
Since they must have the same kernel, we conclude that we must have the form~(ii).

\smallskip\noindent
\textit{Case 3.}
$\lDim V(\phi)=1$. It follows from Propositions~\ref{dimV=1} and~\ref{ldimV=1} that we only need
to treat  the case where $\Dim V(\phi)=2$. So suppose that $\Dim V(\phi)=2$ and set $b_i a_j= \zeta_0 \otimes f_{ij}$
for all $ 1 \leq i,j \leq 3$. For every $\zeta \in X$, let $M(\zeta)= (f_{ij} (\zeta))_{1 \leq i,j \leq 3}$ and consider the vector
space $N= \{ M(\zeta): \zeta \in X\}$. As above, we show that $N$ is a nilpotent subspace of $M_3 (\CC)$, and hence,
either $N$ is triangularizable, or
 \begin{equation*}
 N=  \left\{   \left(
      \begin{array}{ccc}
        0 & \gamma' & 0 \\
        \gamma & 0 & \gamma' \\
        0 & -\gamma & 0 \\
      \end{array}
    \right)   : \gamma,\gamma' \in \CC  \right\}.
 \end{equation*}
Thus, there exists a representation of $\phi$  of the form  $\phi= \sum_{i=1}^3 {M_{u_i,v_i}}$, $u_i, v_i \in L(X)$
such that either  $v_i u_j=0$ for all $i \geq j$ or
 \begin{equation*}
(v_iu_j)_{1 \leq i,j \leq 3}= \left(\begin{array}{ccc}
                                             0 & \zeta_0 \otimes g & 0 \\
                                             \zeta_0 \otimes f & 0 & \zeta_0 \otimes g \\
                                             0 & - \zeta_0 \otimes f & 0
                                           \end{array}\right); \;\;\; \zeta_0 \in X \text { and } f,g \in X^*.
\end{equation*}

We now move to the case  $\lDim L(\phi)=2$. Let $\zeta \in X$ be such that the vector spaces $L(\phi)\zeta$ and~$V(\phi)\zeta$ have maximal dimensions.
Choose $u \in L(\phi)$ such that $u \neq 0$ and $u \zeta=0$. Write $\phi= \sum_{i=1}^3 M_{u_i,v_i}$ where $u_1=u$.
Set $\psi=  \sum_{i=2}^3 M_{u_i,v_i}$. Then arguing as in the proof of Theorem \ref{Ger-qu}, we see that for every
$x \in A$ satisfying $x V(\psi) \zeta \subseteq \CC \zeta$, $\phi(x)_{|L(\psi) \zeta}$ must be nilpotent.
It follows from Corollary \ref{2-qu} that there exists a representation of $\psi= \sum_{i=2}^3 M_{c_i,d_i}$
such that $d_j c_i \zeta=0$ for $j \geq i$. We have thereby shown that there exists a representation of
$\phi= \sum_{i=1}^3 M_{u_i,v_i}$ such that $v_ju_i \zeta=0$ for $j\geq i$. Next we adapt the argument
used in the proof of Theorem~\ref{Ger-qu} to show that either  $R(\phi) u_1=0$ or $v_3 L(\phi)=0$
(by our assumption on $\zeta$ and $V(\phi)$,  $\lDim V(\phi) \leq 3$).
Treating the three occurring cases $\lDim V(\phi)=1,\,2$ or~$3$
separately, it is easy to complete the argument.

Suppose finally that $\lDim L(\phi)=1$. Then there exists $\zeta_0 \in X$ such that $a_i= \zeta_0 \otimes f_i$.
Therefore,  for each $x \in A$ and $\zeta \in X$, we have
\be
\phi(x) \zeta= \sum_{i=1}^3 f_i (xb_i \zeta) \zeta_0.
\ee
Suppose that $b_i \zeta_0 \neq 0$ for some $i$ and choose $x \in A$ such that  $f_i(xb_i \zeta_0)=1$, $f_j (xb_i \zeta_0)=0$ for
$j \neq i$ (this is possible since the set $ \{ f_1, f_2, f_3\}$ is linearly independent).
Then we get a contradiction. Thus, $b_i a_j=0$ for each $i,j$.

To prove the converse, observe that if $ \phi= \sum M_{u_i,v_i}$ and $v_iu_j=0$ for all $i \geq j$, then it follows from
Corollary~\ref{prop:elop-nilpotent} that $\phi(x)^4=0$ for all $x \in A$. A straightforward computation checking
the other  cases shows that $\phi (x)^5=0$ for all $x\in A$. This completes the proof.
\end{proof}

\begin{cor}
Let $\phi\in\ElthAX$ be a locally quasi-nilpotent elementary operator. Then $\phi^* \phi=0$ and $\phi (x)^5=0$ for all $x\in A$.
\end{cor}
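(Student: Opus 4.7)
The plan is to apply the structure theorem, Theorem~\ref{3-qu}, and then verify both assertions case by case. Since $\phi$ is locally quasi-nilpotent, the theorem gives a representation $\phi=\sum_{i=1}^3 M_{u_i,v_i}$ of one of the forms~(i), (ii), (iii); in each case I would check the two identities directly.

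For $\phi^*\phi=0$, I would expand $\phi^*\phi(x)=\sum_{i,j}v_iu_j\,x\,v_ju_i$. In case~(i) the hypothesis $v_iu_j=0$ for $i\geq j$ forces each summand to vanish, since $v_iu_j\neq0$ requires $i<j$ and then $v_ju_i=0$. In cases~(ii) and~(iii) only the four pairs $(i,j)\in\{(1,2),(2,1),(2,3),(3,2)\}$ contribute; using the rank-one-sandwich identity $(\eta\otimes h)\,x\,(\eta'\otimes h')=h(x\eta')\,\eta\otimes h'$ together with the antisymmetric sign pattern in the matrix, the contributions of $(1,2)$ and $(3,2)$ should cancel, as should those of $(2,1)$ and $(2,3)$.

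For $\phi(x)^5=0$, case~(i) is immediate from Corollary~\ref{prop:elop-nilpotent}, which in fact gives $\phi(x)^4=0$. In cases~(ii) and~(iii) every nonzero $v_iu_j$ is rank one, so iterating the above identity collapses $\phi(x)^2$ to an explicit finite sum of rank-one operators whose middle scalars lie among $\{f(x\zeta_k),g(x\zeta_k)\}$. Squaring $\phi(x)^2$ once more, the sign pattern forces extensive cancellation and $\phi(x)^4$ collapses to a single rank-one operator $\xi\otimes\psi$, whose left factor $\xi$ is an explicit combination of vectors of the form $u_ix\zeta$ (with $\zeta\in\{\zeta_0,\zeta_1\}$ in case~(ii), and $\zeta=\zeta_0$ in case~(iii)).

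A final application of $\phi(x)$ to~$\xi$ reduces, after pulling out the surviving rank-one factors $v_ku_i$, to the trivial cancellation $f(x\zeta_1)f(x\zeta_0)-f(x\zeta_0)f(x\zeta_1)=0$ in case~(ii), and its analogue $f(x\zeta_0)g(x\zeta_0)-g(x\zeta_0)f(x\zeta_0)=0$ in case~(iii); hence $\phi(x)\xi=0$ and $\phi(x)^5=\phi(x)(\xi)\otimes\psi=0$. The main obstacle I anticipate is purely notational: the expansion of $\phi(x)^2\cdot\phi(x)^2$ has nine terms, and keeping track of the cancellations cleanly requires grouping by the middle functional and systematically discarding the terms killed by $v_iu_i=0$.
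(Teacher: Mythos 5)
Your proposal is correct and is exactly the argument the paper intends: the corollary is left as a direct case-by-case verification from Theorem~\ref{3-qu} (the paper itself only remarks that ``a straightforward computation checking the other cases shows that $\phi(x)^5=0$''), and your computations --- $\phi^*\phi=0$ by cancellation of the rank-one sandwiches, $\phi(x)^4$ collapsing to a single rank-one operator annihilated by one further application of $\phi(x)$ --- all check out. One small bookkeeping slip: in cases (ii)--(iii) the $(1,2)$ term of $\phi^*\phi(x)$ cancels against the $(2,3)$ term and the $(2,1)$ term against the $(3,2)$ term (not the pairing you wrote), though the total is zero either way.
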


\begin{cor}
Let $\phi\in\ElthAX$ be a locally quasi-nilpotent elementary operator. Then one of the following two cases occurs:
\begin{enumerate}
\item[{\rm (i)}] there exists a representation of $\phi$ of the form $\phi= \sum_{i=1}^3 M_{u_i,v_i}$, where $v_i u_j=0$ for all $i \geq j$;
\smallskip
\item[{\rm (ii)}]  the space $\phi (x) L(\phi) X$ has at most dimension $3$ for all $x \in A$, and hence $\Dim \phi(x)^2 X \leq 3$ for all $x\in A$.
\end{enumerate}
\end{cor}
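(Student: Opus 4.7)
The plan is to derive the dichotomy by invoking Theorem~\ref{3-qu} and then inspecting the two exceptional normal forms directly. First I would apply Theorem~\ref{3-qu} to put $\phi$ into one of the three forms $\phi=\sum_{i=1}^3 M_{u_i,v_i}$ listed there. Case~(i) of the theorem is precisely case~(i) of the corollary, so the task reduces to showing that in the exceptional representations~(ii) and~(iii) the space $\phi(x)L(\phi)X$ has dimension at most~$3$ for every $x\in A$.

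The key observation is that in both exceptional forms every nonzero product $v_iu_j$ is a rank-one operator of the shape $\eta\otimes h$, with $\eta\in\spa\{\zeta_0,\zeta_1\}$ in case~(ii), respectively $\eta\in\CC\zeta_0$ in case~(iii), and $h\in\{f,g\}$. Expanding
\begin{equation*}
\phi(x)u_ky=\sum_{i=1}^3 u_ix\,v_iu_ky
\end{equation*}
and reading the entries from the explicit matrices, the vector $y\in X$ enters only through the scalars $f(y)$ and $g(y)$. A direct computation yields, in case~(ii),
\begin{equation*}
\phi(x)u_1y=f(y)\,u_2x\zeta_0,\quad \phi(x)u_2y=f(y)\bigl(u_1x\zeta_1-u_3x\zeta_0\bigr),\quad \phi(x)u_3y=f(y)\,u_2x\zeta_1,
\end{equation*}
and in case~(iii),
\begin{equation*}
\phi(x)u_1y=f(y)\,u_2x\zeta_0,\quad \phi(x)u_2y=g(y)\,u_1x\zeta_0-f(y)\,u_3x\zeta_0,\quad \phi(x)u_3y=g(y)\,u_2x\zeta_0.
\end{equation*}
In each case the images lie in the span of three vectors of $X$ that do not depend on~$y$, so $\phi(x)L(\phi)X=\sum_{k=1}^3\phi(x)u_kX$ has dimension at most~$3$.

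The remaining assertion is immediate from the inclusion $\phi(x)X\subseteq\sum_i u_iX=L(\phi)X$, which forces $\phi(x)^2X\subseteq\phi(x)L(\phi)X$. The only step requiring any care is transcribing the entries of $(v_iu_j)$ from the matrices in~(ii) and~(iii) correctly; there is no deeper obstacle, the corollary being essentially a bookkeeping consequence of the classification established in Theorem~\ref{3-qu}.
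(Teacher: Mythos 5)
Your proposal is correct and follows essentially the same route as the paper: invoke Theorem~\ref{3-qu}, identify case~(i) of the theorem with case~(i) of the corollary, and in the exceptional forms~(ii) and~(iii) compute $\phi(x)u_k y=\sum_i u_ix\,v_iu_ky$ from the matrix $(v_iu_j)$ to see that $\phi(x)L(\phi)X$ lies in the span of three vectors independent of~$y$ (the paper records exactly the spanning sets $\{u_2x\zeta_0,\,u_2x\zeta_1,\,u_1x\zeta_1-u_3x\zeta_0\}$ and $\{u_2x\zeta_0,\,u_1x\zeta_0,\,u_3x\zeta_0\}$ that your computation produces). The only difference is that you spell out the rank-one evaluations and the inclusion $\phi(x)^2X\subseteq\phi(x)L(\phi)X$, which the paper leaves implicit.
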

\begin{proof}
Suppose first that there exists a representation of $\phi$  of the form $\phi= \sum_{i=1}^3 M_{u_i, v_i}$ such that
\begin{equation*}
(v_iu_j)_{1 \leq i,j \leq 3}= \left(\begin{array}{ccc}
                                             0 & \zeta_1 \otimes f & 0 \\
                                             \zeta_0 \otimes f & 0 & \zeta_1 \otimes f \\
                                             0 & - \zeta_0 \otimes f & 0
                                           \end{array}\right) ,
\end{equation*}
where $\zeta_0, \zeta_1 \in X$, $f \in X^*$ and $\{  \zeta_0, \zeta_1\}$ is linearly dependent.
Then
\[
\phi(x)  L(\phi) X \subseteq\spa\{ u_2 x \zeta_0, u_2 x \zeta_1,   u_1x \zeta_1-u_3x \zeta_0   \}\quad(x \in A).
\]
Next suppose that there exists a representation of $\phi$  of the form $\phi= \sum_{i=1}^3 M_{u_i, v_i}$ such that
\begin{equation*}
(v_iu_j)_{1 \leq i,j \leq 3}= \left(\begin{array}{ccc}
                                             0 & \zeta_0 \otimes g & 0 \\
                                             \zeta_0 \otimes f & 0 & \zeta_0 \otimes g \\
                                             0 & - \zeta_0 \otimes f & 0
                                           \end{array}\right),
\end{equation*}
where  $\{f,g\}\subseteq X^*$ is linearly independent and $\zeta_0 \in X$.
Then
\[
\phi(x)  L(\phi) X \subseteq\spa\{ u_2 x \zeta_0,   u_1x \zeta_0, u_3x \zeta_0   \}\quad(x \in A).
\]
Now the desired conclusion follows from Theorem~\ref{3-qu}.
\end{proof}
\begin{rmk}
In the above proof, if $\lDim V(\phi)=2$ and $\lDim L(\phi)=3$, then the space $\widehat{X}$
(see Lemma~\ref{mbr}) has rank at most~$2$.  Hence one can use the classification of spaces of matrices of rank at most~$2$~\cite{Atk}.
On the other hand, if $\lDim L(\phi)= 2$, one can use the structure of locally linearly dependent  spaces of dimension~$3$
which is clearly stated in \cite[Theorem~3.1]{ChSe}.  However, in both cases, the proof will be longer.
Our approach here is  direct and relatively elementary.
\end{rmk}
\begin{rmk}\label{rem:2-vs-3}
In Section~\ref{sect:locally-nil}, our standard assumption on the elementary operator $\phi\in\ElAX$
was that, for all $x\in A$ and $\zeta\in X$ satisfying $xV(\phi)\zeta\subseteq\CC\zeta$, $\phi(x)_{|L(\phi) \zeta}$ is nilpotent.
Suppose that $\phi= \sum_{i=1}^3M_{u_i,v_i}$  is an elementary operator on $A$ satisfying
\begin{equation*}
(v_iu_j)_{1 \leq i,j \leq 3}= \left(
                               \begin{array}{ccc}
                                 0 & v_2u_3 & 0 \\
                                 v_2u_1 & 0& v_2u_3 \\
                                 0 & -v_2u_1 & 0\\
                               \end{array}
                             \right).
\end{equation*}
Then $\phi$ satisfies the above condition; however, in general,
$\phi$ is not locally quasi-nilpotent. This shows that, in contrast to the case of length two, see Corollary~\ref{2-qu},
for length three these conditions are no longer equivalent.
\end{rmk}

\begin{acknowledgement}
Part of the research on this paper was carried out while the second-named author visited the Universit\' e Moulay Ismail
in Morocco. He would like to express his gratitude for the generous hospitality and support received from his colleagues there.
This visit was supported by the London Mathematical Society under their ``Research in Pairs'' programme.
\end{acknowledgement}

\smallskip


\begin{thebibliography}{18}

\bibitem{Atk}
M. D. Atkinson, \textit{Primitive spaces of matrices of bounded rank.} II,
J. Austral. Math. Soc. \textbf{34} (1983), 306--315.

\bibitem{AtLl}
M. D. Atkinson and S. Llyold,
\textit{Large spaces of matrices of bounded rank},
Quart. J. Math. \textbf{31} (1980), 253--262.

\bibitem{Aup}
B. Aupetit, \textit{A primer on spectral theory}, Springer-Verlag, New York, 1991.

\bibitem{Nad}
N. Boudi, \textit{On the product of derivations in Banach algebras},
Math. Proc. Royal Irish Acad. \textbf{109} (2009), 201--211.

\bibitem{NaMa04}
N. Boudi and M. Mathieu,  \textit{Commutators with finite spectrum},
Illinois J. Math. \textbf{48} (2004), 687--699.

\bibitem{NaMa11}
N. Boudi and M. Mathieu, \textit{Elementary operators that are spectrally bounded},
Operator Theory: Advances and Applications \textbf{212} (2011), 1--15.

\bibitem{NaMa14}
N. Boudi and M. Mathieu, \textit{More elementary operators that are spectrally bounded},
submitted.

\bibitem{BrSe}
M. Bre\v{s}ar and P. \v{S}emrl, \textit{On locally linearly dependent operators and derivations},
Trans. Amer. Math. Soc. \textbf{351} (1999), 1257--1275.

\bibitem{ChKeLee}
M. A. Chebotar, W.-F. Ke and P.-H. Lee,
\textit{On Bre\v sar--\v Semrl conjecture and derivations of Banach algebras},
Quart. J. Math. \textbf{57} (2008), 469--478.

\bibitem{ChSe}
M. A. Chebotar and P. \v{S}emrl, \textit{Minimal locally linearly dependent spaces of operators},
Linear Algebra Appl. \textbf{429} (4) (2008), 887--900.

\bibitem{CuMa}
R. E. Curto and M. Mathieu, \textit{Spectrally bounded generalized inner derivations},
Proc. Amer. Math. Soc. \textbf{123} (1995), 2431--2434.

\bibitem{Fas}
M. A. Fasoli, \textit{Classification of nilpotent linear spaces in $M(4, \CC)$},
Comm. in Algebra \textbf{25} (6) (1997), 1919--1932.

\bibitem{FoMa}
E. Fornasini and G. Marchesini, \textit{Properties of pairs of matrices and state models for two-dimensional systems},
\textit{Part~$1$: State dynamics and geometry of the pairs},
In: Multivariate Analysis: Future Directions, C. R. Rao (ed.), Elsevier Science Publishers B.V., 1993, 131--153.

\bibitem{Ger}
M. Gerstenhaber,  \textit{On nilalgebras and linear varieties of nilpotent matrices}.\!{~I},
Amer. J. Math. \textbf{80} (1958), 614--622.

\bibitem{GoLaWo}
W. Gong, D. R. Larson and W. R. Wogen, \textit{Two results on separating vectors},
Indiana Univ. Math. J. \textbf{43}  (1994), 1159--1165.

\bibitem{LiPa}
J. Li and Z. Pan, \textit{Algebraic reflexivity of linear transformations},
Proc. Amer. Math. Soc. \textbf{135 } (2007), 1695--1699.

\bibitem{MeSe}
R. Meshulam and P. \v{S}emrl, \textit{Locally linearly dependent operators},
Pacific J. Math. \textbf{203 } (2002), 441--459.

\bibitem{Pta}
V. Pt\'{a}k, \textit{Derivations, commutators and the radical},
Manuscripta Math. \textbf{23} (1978), 355--362.

\end{thebibliography}
\end{document}